\title{Infinite Free Resolutions over Monomial Rings in Two Variables}
\author{
Gwyneth R. Whieldon
        }
\date{\today}
\theoremstyle{definition}
\newtheorem{theorem}{Theorem}[section]
\newtheorem{definition}[theorem]{Definition}
\newtheorem{lemma}[theorem]{Lemma}
\newtheorem{example}[theorem]{Example}
\newtheorem{proposition}[theorem]{Proposition}
\newtheorem{openquestion}[theorem]{Open Question}
\newtheorem*{notation*}{Notation}
\newcommand{\NN}{{\mathbb N}}
\newcommand{\kk}{\Bbbk}
\newcommand{\Syz}{\text{Syz}}
\begin{document}


\maketitle



\begin{abstract}
Let \(M\subseteq\kk[x,y]\) be a monomial ideal \(M=(m_1,m_2,...,m_r)\), where the \(m_i\) are a minimal generating set of \(M\).  We construct an explicit free resolution of \(\kk\) over \(S=\kk[x,y]/M\) for all monomial ideals \(M\), and provide recursive formulas for the Betti numbers.  In particular, if \(M\) is any monomial ideal (excepting five degenerate cases), the total Betti numbers \(\beta_i^S(\kk)\) are given by \(\beta_0^S(\kk)=1\), \(\beta_1^S(\kk)=2\), and \(\beta_i^S(\kk)=\beta_{i-1}(\kk)+(r-1)\beta_{i-2}^S(\kk)\), where \(r\) is the number of minimal generators of \(M\).

This specializes to the classic example \(S=\kk[x,y]/(x^2,xy)\), which has \(\beta_i^S(\kk)=f_{i+1}\), where \(f_{i+1}\) is the (i+1)st Fibonacci number.
\end{abstract}



\section{Background and Motivation}

Let \(\kk\) be a field, \(R=\kk[x_1,x_2,...,x_n]\) a polynomial ring over \(\kk\), \(Q\) be an ideal of \(R\), and \(S=R/Q\) its quotient ring.  By the Auslander-Buchsbaum-Serre theorem, we have that the free resolution of a module \(A\) over \(S\) is finite if and only if \(S\) is a regular ring.  When \(S\) fails to be regular, however, the complexity of the infinite free resolution of \(A\) depends on the structure of \(S\).

For example, if \(S\) is a Golod ring, monomial ring or a generic toric ring, the Poincar\'e-Betti series is known to be rational.  See \cite{MR2641236}, \cite{MR2158760}, and \cite{MR686351} for results in this direction.  Examples of modules with Poincar\'e-Betti series which are \emph{not} rational are also known, even in the setting of toric rings (see \cite{MR644015}, \cite{MR733933}, and \cite{MR1646972}.)

In the setting of monomial rings \(S=R/M\), where \(M\) is a monomial ideal, the Poincar\'e-Betti series and (frequently non-minimal) resolutions of \(\kk\) over \(S\) are both known. Charalambous produced an explicit free resolution of a residue field \(\kk\) over monomial rings \(S=R/M\), where \(M\) is an arbitrary monomial ideal (Theorem 1, \cite{MR1382031}.)  These resolutions of $\kk$ over ring $S=R/M$ are not necessarily minimal (even for $R=\kk[x,y]$), but do contain the minimal resolutions produced in this paper as a direct summand.  From work of Backelin~\cite{MR686351} and Avramov~\cite{MR2641236}, in the case of monomial rings it is known that the multigraded Poincar\'e-Betti series is a rational function and depends exclusively on the field \(\kk\) and the lcm-lattice of the monomial ideal \(M\).  The denominator of this rational function has been interpreted combinatorially in \cite{MR2188858} and \cite{MR2290910}.

In this paper, we focus primarily on the case where \(R=\kk[x,y]\) and our ring \(S=R/M\) is the quotient of \(R\) by an monomial ideal \(M\).  We partition monomial ideals in two variables into six types (one main case and five degenerate cases,) and explicitly construct a minimal free resolution of \(\kk\) in each type. The denominator of the Poincar\'{e}-Betti series is given as an immediate corollary of the iterative construction of the syzygy modules.

In each of these cases, we use the recursively constructed free resolutions to produce the total Betti sequence \(\beta_i^S(k)\) for the residue fields of \(S=R/M\).  In particular, we completely classify which series can occur as the Poincar\'e-Betti sequence of \(\kk\) over a monomial ring in two variables.  Specifically, when \(M\) is a monomial ideal with \(r\geq 2\) generators (not both pure powers) the Betti sequence of the resolution of the residue class field is given by the sequence \(\beta_0=1\), \(\beta_1=2\), and \(\beta_n=\beta_{n-1}+(r-1)\beta_{n-2}\) for \(n\geq 2\).

%



Our main case includes all monomial ideals \(M\) of the form
\[M=(x^{a_1}y^{b_1},...,x^{a_r}y^{b_r}),\]
where the \(x^{a_i}y^{b_i}\) form a minimal generating set and \(r>2\), or \(r=2\) and not of the form \((x^a,y^b)\).  The five degenerate cases on two or fewer generators, each with distinct resolution types, are described in the Appendix in Section~\ref{sec:degenerate}.

\begin{restatable*}[Resolutions of \(\kk\) over Quotient Rings \(S\)]{theorem}{mainthm}\label{thm:main}  Let \(M\) be a monomial ideal with minimal generating set \(\{m_1,m_2,...,m_r\}\), where \(m_i=x^{a_i}y^{b_i}\) with \(a_1> a_2>\cdots>a_r\geq 0\) and \(0\leq b_1 <b_2<\cdots < b_r\) with \(r>2\), or \(r=2\) and \(M\) not generated by pure powers of \(x\) and \(y\).  A free resolution \({\mathcal F}\) of \(\kk\) over \(S\) is given by
\[{\mathcal F}:  S\xleftarrow{\;\;\partial_1\;\;} F_1\xleftarrow{\;\;\partial_2\;\;} F_2\xleftarrow{\;\;\partial_3\;\;} F_3\xleftarrow{\;\;\partial_4\;\;} F_4\longleftarrow \cdots\]
where \(F_1\cong S^2\), \(F_2\cong S^{r+1}\), and \(F_3\cong S^{3r-1}\).

For \(i\geq 3\), the \((i+1)^{\text{st}}\) stage of the resolution for \(i\geq 3\) is constructed recursively in the following way:

Let \(F_i=F_1^{u_i}\oplus F_2^{v_i}\oplus F_3^{w_i}\) and \(\partial_i=\partial_1^{u_i}\oplus\partial_2^{v_i}\oplus\partial_3^{w_i}\) (with fixed \(u_i,v_i,w_i\) constructed via this method at a previous stage). Then the \((i+1)^{\text{st}}\)-stage of the minimal resolution is given by
\[F_i\cong F_1^{u_i}\oplus F_2^{v_i}\oplus F_3^{w_i}\xleftarrow{\partial_{i+1}} F_1^{(r-1)w_i}\oplus F_2^{u_i+rw_i}\oplus F_3^{v_i}\cong F_{i+1}\]
with \(\partial_{i+1}=\partial_3^{v_i}\oplus \partial_2^{w_i+ru_i}\oplus \partial_1^{(r-1)u_i}\).  Explicit formulas for the maps \(\partial_1\), \(\partial_2\), and \(\partial_3\) are given in Propositions~\ref{prop:syz1and2},~\ref{prop:syz3}, and~\ref{prop:syz4}
\end{restatable*}

With an explicit minimal free resolution in hand, a description of the Poincar\'{e}-Betti series is immediate.

\begin{restatable*}[Poincar\'{e}-Betti Series of \(\kk\) over \(S\)]{theorem}{maincor}\label{cor:main}  Let \(M=(x^{a_1}y^{b_1},\cdots,x^{a_r}y^{b_r})\), with \(a_1> a_2>\cdots>a_r\geq 0\) and \(0\leq b_1 <b_2<\cdots < b_r\) with \(r>2\), or \(r=2\) and \(M\) not generated by pure powers of \(x\) and \(y\).  The total Betti numbers of the resolution of \(\kk\) over \(S=R/M\) are given by
\begin{equation*}
\beta_i^S(\kk)=\begin{cases}
1&\text{ if \(i=0\),}\\
2&\text{ if \(i=1\),}\\
\beta_{i-1}^S(\kk)+(r-1)\beta_{i-2}^S(\kk)&\text{ if \(i\geq 2\).}\\
\end{cases}
\end{equation*}
The Poincar\'e-Betti series then is
\[P_S(z)=\frac{1+z}{1-z+(1-r)z^2}.\]
\end{restatable*}

Note that this implies that the Poincar\'e-Betti sequences of these resolutions depends only on the number of generators of \(M\), rather than on the degrees of or relations between the generators.

\begin{example}[2-generated ideals]\label{example:main} Consider the monomial ideals \((xy^2,y^4)\) and \((x^2y,xy^2)\).  The graded Betti diagram of the minimal resolutions of \(\kk\) found over \(S_1=\kk[x,y]/(xy^2,y^4)\) and \(S_2=\kk[x,y]/(x^2y,xy^2)\) are:

\vspace{10pt}
\begin{center}
$\begin{matrix}
&0&1&2&3&4&5&6&\cdots\\
\text{total:}&1&2&3&5&8&13&21&\cdots\\
\text{0:}&1&2&1&\text{.}&\text{.}&\text{.}&\text{.}\\
\text{1:}&\text{.}&\text{.}&1&2&1&\text{.}&\text{.}\\
\text{2:}&\text{.}&\text{.}&1&3&4&3&1\\
\text{3:}&\text{.}&\text{.}&\text{.}&\text{.}&2&6&7\\
\text{4:}&\text{.}&\text{.}&\text{.}&\text{.}&1&4&9\\
\text{5:}&\text{.}&\text{.}&\text{.}&\text{.}&\text{.}&\text{.}&3\\
\text{6:}&\text{.}&\text{.}&\text{.}&\text{.}&\text{.}&\text{.}&1\\
      \end{matrix}\;\;\;\;\;\;\;\;\begin{matrix}
      &0&1&2&3&4&5&6&\cdots \\
      \text{total:}&1&2&3&5&8&13&21&\cdots \\
\text{0:}&1&2&1&\text{.}&\text{.}&\text{.}&\text{.}\\
\text{1:}&\text{.}&\text{.}&2&5&4&1&\text{.}\\
\text{2:}&\text{.}&\text{.}&\text{.}&\text{.}&4&12&13\\
\text{3:}&\text{.}&\text{.}&\text{.}&\text{.}&\text{.}&\text{.}&8\\
      \end{matrix}$
\end{center}
\vspace{10pt}

Note that while their graded Betti diagrams differ, the total Betti numbers are the same, given by \(\beta_i^{S_1}(\kk)=\beta_i^{S_2}(\kk)=f_{i+1}\), where \(f_{i+1}\) is the \((i+1)^{\text{st}}\) Fibonacci number.  The graded Betti numbers depend (in the main case) exclusively on the number and degrees of the generators \(m_i\).
\end{example}

\section{Staircase Diagram of Monomial Ideals in Two Variables}

Let \(M=(x^{a_1}y^{b_1},...,x^{a_r}y^{b_r})\) be a monomial ideal (given by its minimal generating set,) ordered such that \(a_1> a_2>\cdots>a_r\geq 0\) and \(0\leq b_1 <b_2<\cdots < b_r\).  A monomial ideal \(M\) in two variables can always be put into this form, which may be represented by a \emph{staircase diagram} (see \cite{MR2110098}). 
\begin{figure}[h!]
\begin{center}
\begin{tikzpicture}[scale=0.5,auto=left,vertices/.style={circle, fill=black, inner sep=1.5pt}]

\foreach \xcoord/\ycoord in {1/2,0/4} {
\fill[-,fill=black!20] (\xcoord,\ycoord)--(\xcoord,6)--(6,6)--(6,\ycoord)--(\xcoord,\ycoord);
\fill[-,fill=black!10] (\xcoord,6)--(6,6)--(6,\ycoord)--(6.5,\ycoord)--(6.5,6.5)--(\xcoord,6.5)--(\xcoord,6);
};

\foreach \yval in {1,2,3,4,5,6} {
	\draw[-,black!40] (0,\yval)--(6.5,\yval);
};
\foreach \xval in {1,2,3,4,5,6} {
	\draw[-,black!40] (\xval,0)--(\xval,6.5);
};

\draw[->] (0,0)--(7,0);
\draw[->] (0,0)--(0,7);
\node[label=right:{x}] at (7,0) {};
\node[label=above:{y}] at (0,7) {};

\foreach \xcoord/\ycoord in {1/2,0/4} {
	\node[vertices] at (\xcoord,\ycoord){};
};

\node at (3.5,-1.5) {$M_1=(xy^2,y^4)$};
\end{tikzpicture}\;\;\;\;\;\;\begin{tikzpicture}[scale=0.5,auto=left,vertices/.style={circle, fill=black, inner sep=1.5pt}]

\foreach \xcoord/\ycoord in {2/1,1/2} {
\fill[-,fill=black!20] (\xcoord,\ycoord)--(\xcoord,6)--(6,6)--(6,\ycoord)--(\xcoord,\ycoord);
\fill[-,fill=black!10] (\xcoord,6)--(6,6)--(6,\ycoord)--(6.5,\ycoord)--(6.5,6.5)--(\xcoord,6.5)--(\xcoord,6);
};

\foreach \yval in {1,2,3,4,5,6} {
	\draw[-,black!40] (0,\yval)--(6.5,\yval);
};
\foreach \xval in {1,2,3,4,5,6} {
	\draw[-,black!40] (\xval,0)--(\xval,6.5);
};

\draw[->] (0,0)--(7,0);
\draw[->] (0,0)--(0,7);
\node[label=right:{x}] at (7,0) {};
\node[label=above:{y}] at (0,7) {};

\foreach \xcoord/\ycoord in {2/1,1/2} {
	\node[vertices] at (\xcoord,\ycoord){};
};

\node at (3.5,-1.5) {\(M_2=(x^2y,xy^2)\)};
\end{tikzpicture}
\end{center}

\caption{Staircase diagrams of \(M_1\) and \(M_2\) shown.\label{fig:staircase}}

\end{figure}

See Figure~\ref{fig:staircase} for the staircase diagrams of the monomial ideals in Example~\ref{example:main}. All monomials \(m\) corresponding to lattice points inside or on the boundary of the grey area are in the ideal \(M\), and all of the lattice points outside of the shaded area or its boundary are monomials in the ring \(S=\kk[x,y]/M\). 

\begin{definition}[Colon Ideals $M_x$ and $M_y$ in $S$]\label{defn:colon}Let \(M_x\) and \(M_y\) be the ideals \(0:(x)\) and \(0:(y)\) of $S$,
\begin{align*}
M_x &= \{ m : mx\in M, m\not\in M\}=0:(x),\text{ and}\\
M_y &= \{ m : my\in M, m\not\in M\}=0:(y).
\end{align*}
\end{definition}
The elements of \(M_x\) and \(M_y\) for the monomial ideals in Example~\ref{example:main} are shown in white in Figure~\ref{fig:halo}.

\begin{figure}[h!]
\begin{center}
\begin{tikzpicture}[scale=0.4,auto=left,vertices/.style={circle, draw, fill=black, inner sep=1.5pt}, dotvertices/.style={circle, draw, fill=white, inner sep=1.5pt}]

\foreach \xcoord/\ycoord in {1/2,0/4} {
\fill[-,fill=black!20] (\xcoord,\ycoord)--(\xcoord,6)--(6,6)--(6,\ycoord)--(\xcoord,\ycoord);
\fill[-,fill=black!10] (\xcoord,6)--(6,6)--(6,\ycoord)--(6.5,\ycoord)--(6.5,6.5)--(\xcoord,6.5)--(\xcoord,6);
};

\foreach \yval in {1,2,3,4,5,6} {
	\draw[-,black!40] (0,\yval)--(6.5,\yval);
};
\foreach \xval in {1,2,3,4,5,6} {
	\draw[-,black!40] (\xval,0)--(\xval,6.5);
};

\draw[->] (0,0)--(7,0);
\draw[->] (0,0)--(0,7);

\foreach \xcoord/\ycoord in {1/2,0/4} {
\node[vertices] at (\xcoord,\ycoord){};
};

\foreach \xcoord/\ycoord in {0/2,0/3} {
\node[dotvertices] at (\xcoord,\ycoord){};
\draw[->] (\xcoord+0.1,\ycoord)--(\xcoord+0.8,\ycoord);
};

\node[label=right:{x}] at (7,0) {};
\node[label=above:{y}] at (0,7) {};

\node at (3.5,-1.5) {\(M_1:(x)=(y^2)\)};
\end{tikzpicture}\;\;\;\;\;\;\begin{tikzpicture}[scale=0.4,auto=left,vertices/.style={circle, draw, fill=black, inner sep=1.5pt},dotvertices/.style={circle, draw, fill=white, inner sep=1.5pt}]

\foreach \xcoord/\ycoord in {2/1,1/2} {
\fill[-,fill=black!20] (\xcoord,\ycoord)--(\xcoord,6)--(6,6)--(6,\ycoord)--(\xcoord,\ycoord);
\fill[-,fill=black!10] (\xcoord,6)--(6,6)--(6,\ycoord)--(6.5,\ycoord)--(6.5,6.5)--(\xcoord,6.5)--(\xcoord,6);
};

\foreach \yval in {1,2,3,4,5,6} {
	\draw[-,black!40] (0,\yval)--(6.5,\yval);
};
\foreach \xval in {1,2,3,4,5,6} {
	\draw[-,black!40] (\xval,0)--(\xval,6.5);
};

\draw[->] (0,0)--(7,0);
\draw[->] (0,0)--(0,7);

\foreach \xcoord/\ycoord in {2/1,1/2} {
\node[vertices] at (\xcoord,\ycoord){};
};

\foreach \xcoord/\ycoord in {1/1,0/2,0/3,0/4,0/5,0/6} {
\node[dotvertices] at (\xcoord,\ycoord){};
\draw[->] (\xcoord+0.1,\ycoord)--(\xcoord+0.8,\ycoord);
};

\node[label=right:{x}] at (7,0) {};
\node[label=above:{y}] at (0,7) {};

\node at (3.5,-1.5) {\(M_2:(x)=(xy,y^2)\)};
\end{tikzpicture}

\begin{tikzpicture}[scale=0.4,auto=left,vertices/.style={circle, draw, fill=black, inner sep=1.5pt}, dotvertices/.style={circle, draw, fill=white, inner sep=1.5pt}]

\foreach \xcoord/\ycoord in {1/2,0/4} {
\fill[-,fill=black!20] (\xcoord,\ycoord)--(\xcoord,6)--(6,6)--(6,\ycoord)--(\xcoord,\ycoord);
\fill[-,fill=black!10] (\xcoord,6)--(6,6)--(6,\ycoord)--(6.5,\ycoord)--(6.5,6.5)--(\xcoord,6.5)--(\xcoord,6);
};

\foreach \yval in {1,2,3,4,5,6} {
	\draw[-,black!40] (0,\yval)--(6.5,\yval);
};
\foreach \xval in {1,2,3,4,5,6} {
	\draw[-,black!40] (\xval,0)--(\xval,6.5);
};

\draw[->] (0,0)--(7,0);
\draw[->] (0,0)--(0,7);

\foreach \xcoord/\ycoord in {1/2,0/4} {
\node[vertices] at (\xcoord,\ycoord){};
};

\foreach \xcoord/\ycoord in {0/3,1/1,2/1,3/1,4/1,5/1,6/1} {
\node[dotvertices] at (\xcoord,\ycoord){};
\draw[->] (\xcoord,\ycoord+0.1)--(\xcoord,\ycoord+0.8);
};

\node[label=right:{x}] at (7,0) {};
\node[label=above:{y}] at (0,7) {};

\node at (3.5,-1.5) {\(M_1:(y)=(xy,y^3)\)};
\end{tikzpicture}\;\;\;\;\;\;\begin{tikzpicture}[scale=0.4,auto=left,vertices/.style={circle, draw, fill=black, inner sep=1.5pt},dotvertices/.style={circle, draw, fill=white, inner sep=1.5pt}]

\foreach \xcoord/\ycoord in {2/1,1/2} {
\fill[-,fill=black!20] (\xcoord,\ycoord)--(\xcoord,6)--(6,6)--(6,\ycoord)--(\xcoord,\ycoord);
\fill[-,fill=black!10] (\xcoord,6)--(6,6)--(6,\ycoord)--(6.5,\ycoord)--(6.5,6.5)--(\xcoord,6.5)--(\xcoord,6);
};

\foreach \yval in {1,2,3,4,5,6} {
	\draw[-,black!40] (0,\yval)--(6.5,\yval);
};
\foreach \xval in {1,2,3,4,5,6} {
	\draw[-,black!40] (\xval,0)--(\xval,6.5);
};

\draw[->] (0,0)--(7,0);
\draw[->] (0,0)--(0,7);

\foreach \xcoord/\ycoord in {2/1,1/2} {
\node[vertices] at (\xcoord,\ycoord){};
};

\foreach \xcoord/\ycoord in {1/1,2/0,3/0,4/0,5/0,6/0} {
\node[dotvertices] at (\xcoord,\ycoord){};
\draw[->] (\xcoord,\ycoord+0.1)--(\xcoord,\ycoord+0.8);
};

\node[label=right:{x}] at (7,0) {};
\node[label=above:{y}] at (0,7) {};

\node at (3.5,-1.5) {\(M_2:(y)=(x^2,xy)\)};
\end{tikzpicture}
\end{center}
\caption{Elements of \(M_x\) and \(M_y\). \label{fig:halo}}

\end{figure}

We will use the ideals \(M_x\) and \(M_y\) frequently when constructing our syzygies of \(\kk\) over \(S=\kk[x,y]/M\).

\begin{proposition}[First and Second Syzygy Modules of \(\kk\) over \(S\)]\label{prop:syz1and2}
Let \(M=(x^{a_1}y^{b_1},...,x^{a_r}y^{b_r})\) be a monomial ideal given by its minimal set of generators, ordered so that \(a_1> a_2>\cdots>a_r\geq 0\) and \(0\leq b_1 <b_2<\cdots < b_r\).  Let either \(r>2\) or \(r=2\) and \(M\neq(x^n,y^m)\).  The first three stages of a (graded) minimal resolution of \(\kk\) over \(S=\kk[x,y]/M\) are given by

\[{\mathcal F}: \kk \longleftarrow F_0 \xleftarrow{\;\;\;\partial_1\;\;\;} F_1\xleftarrow{\;\;\;\partial_2\;\;\;} F_2.\]
These syzygy modules and their respective bases are:
\begin{center}
\begin{tabular}{l c l}
Syzygy Module &\;\;\; & Basis\\ \hline
\(F_0\cong S\)&\;\;\;& \(\{e_1\}\)\\
\(F_1=S(-1)^2\cong S^2\)&\;\;\;&\(\{e_x,e_y\}\)\\
\(F_2=\left(\bigoplus_{1\leq i\leq r} S(-a_i-b_i)\right)\oplus S(-2)\cong S^{r+1}\)&\;\;\;&\(\{e_{f_1},e_{f_2},...,e_{f_r},e_{f_{r+1}}\}\)
\end{tabular}
\end{center}

\noindent The map \(F_1\xrightarrow{\partial_1} F_2\) is given by 
\begin{align*}
\partial_1(e_x)&=x\cdot e_1\\
\partial_1(e_y)&=y\cdot e_1.
\end{align*}
The map \(F_2\xrightarrow{\partial_2}F_1\) is given by:

\begin{enumerate}[{\bf Case }1:]
\item If all generators of \(M\) are divisible by \(x\) (i.e. \(a_r\geq 1\)), then the second syzygy map \(\partial_2\) is given by
\begin{equation*}
\partial_2(e_{f_i})=\begin{cases}
x^{a_{i}-1}y^{b_{i}}\cdot e_x & \text{if \(1\leq i\leq r\),}\\
-y\cdot e_x+x\cdot e_y&\text{if \(i=r+1\).}
\end{cases}
\end{equation*}
\item If the final generator of \(M\) is \(m_r=y^{b_r}\), then the second syzygy map \(\partial_2\) is given by
\begin{equation*}
\partial_2(e_{f_i})=\begin{cases}
x^{a_{i}-1}y^{b_{i}}\cdot e_x & \text{if \(1\leq i\leq r-1\),}\\
y^{b_r-1}\cdot e_y & \text{if \(i=r\),}\\
-y\cdot e_x+x\cdot e_y&\text{if \(i=r+1\).}
\end{cases}
\end{equation*}
\end{enumerate}

\end{proposition}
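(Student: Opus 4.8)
The statement concerns the first three terms of the \emph{minimal} resolution, so the plan is to verify three things: that $\mathcal F\colon\kk\leftarrow F_0\xleftarrow{\partial_1}F_1\xleftarrow{\partial_2}F_2$ is a complex, that it is exact at $F_0$ and at $F_1$, and that it is minimal (every entry of $\partial_1$ and $\partial_2$ lies in $\mm=(x,y)S$); exactness at $F_2$ is neither asserted nor needed. The complex condition is a one-line check: $\partial_1\partial_2(e_{f_i})=x\cdot x^{a_i-1}y^{b_i}\,e_1=m_ie_1=0$ in $S$ for $1\le i\le r$ (in Case~2 using $y\cdot y^{b_r-1}=m_r$ when $i=r$), while $\partial_1\partial_2(e_{f_{r+1}})=(-yx+xy)e_1=0$. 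Exactness at $F_0$ is also immediate, since $\operatorname{im}\partial_1=(x,y)S=\mm$ is exactly the kernel of the augmentation $S\to\kk$.

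The heart is exactness at $F_1$, i.e.\ $\ker\partial_1=\operatorname{im}\partial_2$, which identifies $\operatorname{im}\partial_2$ with $\Syz^S_1(\mm)=\{(f,g)\in S^2:xf+yg=0\}$. I would prove this with the Koszul complex $K_\bullet=K_\bullet(x,y;S)$: its differential $\binom{-y}{\;x}$ accounts for the generator $\partial_2(e_{f_{r+1}})$, and $H_1(K_\bullet)$ accounts for the rest. From the short exact sequence $0\to M\to R\to S\to 0$, the induced long exact sequence in Koszul homology, the fact that $x,y$ is a regular sequence on $R$ (so $H_1(x,y;R)=0$), and the fact that $M\subseteq\mm$ in the nondegenerate cases (so the map $M/\mm M\to R/\mm_R$ is zero), one obtains $H_1(x,y;S)\cong H_0(x,y;M)=M/\mm M$, a $\kk$-space of dimension $r$ with basis the classes of $m_1,\dots,m_r$. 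Tracing the connecting homomorphism, a cycle $(f,g)$ of $K_\bullet$ maps to the class of $x\tilde f+y\tilde g\in M$ in $M/\mm M$, so the class of $m_i$ is realized by $(x^{a_i-1}y^{b_i},0)$ when $a_i\ge1$ and by $(0,y^{b_i-1})$ when $a_i=0$ (which happens only for $i=r$, the Case~2 modification) --- precisely the columns of $\partial_2$. Hence $\Syz^S_1(\mm)=\operatorname{im}\binom{-y}{x}+(\text{these lifts})=\operatorname{im}\partial_2$.

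An alternative route, which is the one that also pins down the explicit differentials used in Propositions~\ref{prop:syz3} and~\ref{prop:syz4}, is to verify $\ker\partial_1=\operatorname{im}\partial_2$ directly on the $\ZZ^2$-grading: $e_x,e_y$ sit in degrees $(1,0),(0,1)$, each $e_{f_i}$ in degree $(a_i,b_i)=\deg m_i$, and $e_{f_{r+1}}$ in degree $(1,1)$, and in each multidegree $(c,d)$ the piece $(F_1)_{(c,d)}$ is $0$, $\kk$, or $\kk^2$ according to which of $x^{c-1}y^d,x^cy^{d-1}$ lies outside $M$; when $x^cy^d\notin M$ the kernel is the Koszul line, hit by $\partial_2(x^{c-1}y^{d-1}e_{f_{r+1}})$, and when $x^cy^d\in M$ one uses that a minimal generator $m_j$ dividing $x^cy^d$ with $a_j=c$ (resp.\ $b_j=d$) exists, that $\partial_2(y^{d-b_j}e_{f_j})=x^{c-1}y^de_x$ (resp.\ the analogue with $e_y$), and that the $e_x$- or $e_y$-component of $\partial_2(x^{c-1}y^{d-1}e_{f_{r+1}})$ dies in $S$ precisely when the corresponding monomial lies in $M$. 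For minimality, each entry of $\partial_1$ is $x$ or $y$ and each entry of $\partial_2$ is one of $x^{a_i-1}y^{b_i}$, $y^{b_r-1}$, $\pm x$, $\pm y$; such an entry is a unit only if some $m_i$ equals $x$ or $m_r$ equals $y$, which (given the ordering of the generators and $r\ge2$) would force $M=(x,y^{b_2})$ or $M=(x^{a_1},y)$, both among the excluded degenerate cases. A minimal complex that is exact at $F_0$ and $F_1$ is the start of the minimal free resolution, so $\beta_0=1$, $\beta_1=2$, $\beta_2=r+1$ with the stated multigraded shifts follow at once.

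The main obstacle is the exactness step at $F_1$: the Koszul-homology argument is clean, but the direct verification needed for the explicit maps requires careful bookkeeping of which of the $r+1$ summands of $F_2$ survive in a given multidegree, and of the split between Case~1 ($a_r\ge1$) and Case~2 ($m_r=y^{b_r}$) in the boundary degrees $c=0$ and $d=0$, where the pure-power generators of $M$ (if any) are exactly what supplies the needed preimages.
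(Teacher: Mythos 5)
Your proof is correct and takes a genuinely different route from the paper's. The paper analyzes $\ker\partial_1$ directly: a syzygy $w\,e_x + z\,e_y$ satisfies $wx=-zy$ in $S$, and the two cases (both sides zero, or both equal to a common nonzero monomial $s$) produce the colon-ideal generators $\{x^{a_i-1}y^{b_i}e_x\}$, $\{x^{a_i}y^{b_i-1}e_y\}$ and the Koszul relation $-ye_x+xe_y$; an explicit rewriting then shows the $e_y$-type generators are redundant whenever $a_i\ge1$. You instead compute Koszul homology: the long exact sequence from $0\to M\to R\to S\to 0$, together with $H_1(x,y;R)=0$ (regular sequence on $R$) and $M\subseteq(x,y)R$, gives $H_1(x,y;S)\cong M/(x,y)M\cong\kk^r$, and tracing the connecting map identifies the cycles $(x^{a_i-1}y^{b_i},0)$ (or $(0,y^{b_r-1})$ when $a_r=0$) as lifts of the classes $[m_i]$. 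The Koszul route is conceptually tighter --- it yields the count $r$ and the bijection between the ``new'' second syzygies and the minimal generators of $M$ essentially for free --- while the paper's computation is more elementary and supplies, on the spot, the explicit rewrites reused in Proposition~\ref{prop:syz3}. Your alternative multidegree-by-multidegree verification is closer in spirit to the paper's direct approach.

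One wrinkle in the minimality step: the blanket statement that a complex with entries in $\mm=(x,y)S$, exact at $F_0$ and $F_1$, is automatically the start of the minimal resolution is not literally true --- one could append to $F_2$ a column equal to a non-unit multiple of an existing column without disturbing either property --- so $\beta_2=r+1$ still requires checking that the $r+1$ exhibited generators of $\ker\partial_1$ are a minimal generating set. Your Koszul computation actually does the needed work: $\mm$ annihilates $H_1(x,y;S)$, so since the images of $\partial_2(e_{f_1}),\dots,\partial_2(e_{f_r})$ form a $\kk$-basis of $H_1$, they are independent modulo $\mm\ker\partial_1$; and the Koszul generator $-ye_x+xe_y$ has total degree $2$ while $\mm\ker\partial_1\subseteq\mm^2F_1$ lives in total degree $\ge3$ (in the non-degenerate cases every $m_i$ has degree $\ge2$, which forces $\ker\partial_1\subseteq\mm F_1$), so it too is indispensable. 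The paper's own discussion of minimality of this generating set is similarly brief, so this is a shared wrinkle rather than a flaw specific to your argument.
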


\begin{proof}
Let \(M\) be a monomial ideal satisfying the conditions of the theorem.  Note that \(r>2\) or \(r=2\) and \(M\neq(x^n,y^m)\) for \(n,m\geq 0\) implies that \(x,y\) are nonzero elements in \(S\).  Both \(x,y\in S\) will map to zero in the residue field \(\kk\), so \(x\) and \(y\) must span our set of syzygies in \(F_0=S\). We choose \(e_1\) as a generator of \(F_0\), and \(e_x,e_y\) as generators for our first syzygy module \(F_1\cong S(-1)^2\), and we define \(\partial_1(e_x)=x\cdot e_1\) and \(\partial_1(e_y)=y\cdot e_1\).

To construct the second syzygy module, we note that we are looking for a generating set for all elements \(w\cdot e_x+z\cdot e_y\in F_1\) such that \(wx+zy=0\in S\).

By construction of \(M_x=\{ m : mx\in M, m\not\in M\}=0:(x)\) and \(M_y=\{ m : my\in M, m\not\in M\}=0:(y)\) in the previous section, we have that \(\{m\cdot e_x : m\in M_x\}\) and \(\{m'\cdot e_y : m'\in M_y\}\) are nonzero elements of \(F_1\) in the kernel of \(\partial_1\). We also have that \(-yx+xy=0\), so \(-y\cdot e_x+x\cdot e_y\) is another element of \(\ker(\partial_1)\). We now show that all elements in
\begin{align*}
\ker(\partial_1)&=\{w\cdot e_x+z\cdot e_y : wx+zy=0\}\\
&=\{w\cdot e_x+z\cdot e_y : wx=-zy\in S\}
\end{align*}
 may be written as a linear combination of syzygies of these three types:
 \[\underbrace{\left\{x^{a_i-1}y^{b_i}\cdot e_x\right\}_{i=1}^r}_{\text{Type I}}\bigcup\underbrace{\left\{x^{a_i}y^{b_i-1}\cdot e_y\right\}_{i=1}^r}_{\text{Type II}}\bigcup \underbrace{\left\{-y\cdot e_x+x\cdot e_y\right\}}_{\text{Type III}}\]
 

Given a syzygy \(w\cdot e_x+z\cdot e_y\), it follows that \(wx=-zy\in S\). This implies that either
\begin{enumerate}[(i)]
\item \(wx=zy=0\in S\) or 
\item \(wx=-zy=s\in S\), where \(s\neq 0\in S\).
\end{enumerate}

If condition (i) holds, we have \(wx,zy\in M\), which implies that \(w\in M_x\) and \(z\in M_y\).  So \(w\cdot e_x+z\cdot e_y\) is a linear combinations of syzygies of the first two types.

If condition (ii) holds, we can factor out a \(y\) from \(w\) and an \(x\) from \(z\), giving
\begin{align*}
s & = wx=-zy\\
& = w'xy=-z'xy.
\end{align*}
So \(s\) is a monomial divisible by \(xy\), and our original syzygy can be obtained as a multiple of the syzygy \(-y \cdot e_x+x\cdot e_y\),
\begin{align*}
-\frac{s}{xy}(-y\cdot e_x+x\cdot e_y)&=\frac{s}{x}\cdot e_x-\frac{s}{y}\cdot e_y=w\cdot e_x+z\cdot e_y.
\end{align*}
So every syzygy of \(F_1\) can be written as a linear combination of syzygies of the three types above. We now reduce this set of syzygies to a minimal generating set.

Assume \(b_i\geq 1\), so the syzygy \(x^{a_i}y^{b_i-1}\cdot e_y\) exists. If \(b_i=0\), there are no syzygies to consider. If \(a_i\geq 1\), we may write
\begin{align*}
x^{a_i}y^{b_i-1}\cdot e_y & = \left(x^{a_i-1}y^{b_i}-x^{a_i-1}y^{b_i}\right)\cdot e_x+x^{a_i}y^{b_i-1}\cdot e_y\\
& = x^{a_i-1}y^{b_i}\cdot e_x+\left(-x^{a_i-1}y^{b_i}\cdot e_x+x^{a_i}y^{b_i-1}\cdot e_y\right)\\
& = x^{a_i-1}y^{b_i}\cdot e_x+x^{a_i-1}y^{b_i-1}\left(-y\cdot e_x+x\cdot e_y\right)
\end{align*}
Note that this syzygy is a linear combination of syzygies of the first and third type.
In Case 1, \(a_i\geq 1\) for all generators \(x^{a_i}y^{b_i}\) of \(M\). We may then remove all syzygies of Type II, \(m'\cdot e_y\), from our basis, rewriting them as a linear combination of syzygies of the first and third type.  This gives us a kernel minimally spanned by 
\[\left\{x^{a_i-1}y^{b_i}\cdot e_x\right\}_{i=1}^r\bigcup \left\{-y\cdot e_x+x\cdot e_y\right\}.\]

In Case 2, we have \(a_r=0\), and our kernel is spanned by 
\[\left\{x^{a_i-1}y^{b_i}\cdot e_x\right\}_{i=1}^{r-1}\bigcup\left\{y^{b_r-1}\cdot e_y\right\}\bigcup\left\{-y\cdot e_x+x\cdot e_y\right\}.\]
This is also minimal, as the \(x^{a_i-1}y^{b_i}\) are the minimal generating set of \(M_x\), and by construction, \(y^{b_r-1}\cdot e_y\) cannot be written as a linear combination of any of the other syzygies.

Our basis for \(F_2\) is then given by \(\{e_{f_1},e_{f_2},...,e_{f_r},e_{f_{r+1}}\}\), with \(\partial_2(e_{f_i})\) given as in the statement of the theorem, with the degree-shifts of \(F_2\) calculated appropriately.

\end{proof}

We verify that by construction, \(\partial_1\circ\partial_2(e_{f_i})=0\), so
\[F_0\xleftarrow{\;\;\;\partial_1\;\;\;} F_1\xleftarrow{\;\;\;\partial_2\;\;\;}F_2\] as defined form a complex:
\begin{equation*}
\partial_1\circ \partial_2(e_{f_i})=\begin{cases}
\partial_1(x^{a_i-1}y^{b_i}\cdot e_x)&= x^{a_i-1}y^{b_i}\cdot \partial_1(e_x)\\
&=x^{a_i}y^{b_i}\cdot e_1=0 \hspace{51pt} \text{ if \(a_i\geq 1\) and \(1\leq i\leq r\),}\\
\partial_1(y^{b_r-1}\cdot e_y)&= y^{b_r-1}\cdot \partial_1(e_y)\\
&=y^{b_r}\cdot e_1=0 \hspace{65pt}\text{ if \(a_r\geq 0\) and \(i=r\),}\\
\partial_1(-y\cdot e_x+x\cdot e_y)&=-y\cdot \partial_1(e_x)+x\cdot \partial_1(e_y)\\
&=(-yx+xy)\cdot e_1=0 \hspace{25pt} \text{if \(i=r+1\).}\\
\end{cases}
\end{equation*}

\section{Third Syzygy Modules in Main Case}

For an \(r\)-generated monomial ideal \(M\) in \(x,y\), we have so far that the resolution of \(\kk\) over \(S=\kk[x,y]/M\) has second syzygy module \[F_2=\left(\bigoplus_{1\leq i\leq r} S(-a_i-b_i)\right)\oplus S(-2)\cong S^{r+1}\] 
with basis \(\{e_{f_1},e_{f_2},...,e_{f_r},e_{f_{r+1}}\}\), and a second syzygy map:

\begin{equation*}
\partial_2(e_{f_i})=\begin{cases}
x^{a_{i}-1}y^{b_{i}}\cdot e_x & \text{if \(a_i\geq 1\) and \(1\leq i\leq r\),}\\
y^{b_r-1}\cdot e_y & \text{if \(a_r=0\) and \(i=r\),}\\
-y\cdot e_x+x\cdot e_y&\text{if \(i=r+1\)}.
\end{cases}
\end{equation*}

\noindent Before constructing the third syzygy module, we begin with a useful lemma describing resolutions of ideal $M_x$ over $S$.

\begin{lemma}[Syzygy Module of $M_x$ over $S$]\label{lem:syzMx} Let $M=(x^{a_1}y^{b_1},...,x^{a_r}y^{b_r})\subset \kk[x,y]=R$ where $0\leq b_1<\cdots<b_r$ and $a_1>\cdots > a_r\geq 0$, and let $S=R/M$. Let $M_x=0:(x)$ be the module over $S$ from Definition~\ref{defn:colon}.
Let $\{e_{f_i}\}$ be the minimal generating set for $M_x$ as a module over $S$, and set $M_x\xrightarrow{\partial_0} S$ where $\partial_0(e_{f_i})=x^{a_i-1}y^{b_i}\in S$.
Then the first syzygy module of $M_x$, $\Syz_1(M_x)\cong\ker(\partial_0)$, is minimally generated by
\[\left\{e_{g_i}\right\}_{i=1}^{r}\cup\left\{e_{g_i'}\right\}_{i=1}^{r-1}\text{ if $a_r>0$,}\]
\[\left\{e_{g_i}\right\}_{i=1}^{r-1}\cup\left\{e_{g_i'}\right\}_{i=1}^{r-1}\text{ if $a_r=0$,}\]
where
\begin{align*}
\left\{\partial_1(e_{g_i})=x\cdot e_{f_i}\right\}_{i=1}^{r}&\cup\left\{\partial_1(e_{g_i'})=y^{b_{i+1}-b_{i}}\cdot e_{f_i}\right\}_{i=1}^{r-1}& \text{ for $a_r>0$}\;\\
\left\{\partial_1(e_{g_i})=x\cdot e_{f_i}\right\}_{i=1}^{r-1}&\cup\left\{\partial_1(e_{g_i'})=y^{b_{i+1}-b_{i}}\cdot e_{f_i}\right\}_{i=1}^{r-1}& \text{ for $a_r=0.$}
\end{align*}
\end{lemma}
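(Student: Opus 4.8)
The plan is to compute $\ker(\partial_0)$ directly by the same combinatorial bookkeeping used in Proposition~\ref{prop:syz1and2}. Recall that $M_x = 0:(x)$ has minimal generating set $\{x^{a_i-1}y^{b_i}\}$ (indexed $1\le i\le r$ if $a_r>0$, and $1\le i\le r-1$ if $a_r=0$, since in the latter case $x^{a_r-1}y^{b_r}$ is not a monomial), so $M_x\cong S^r$ or $S^{r-1}$ with basis $\{e_{f_i}\}$, and $\partial_0(e_{f_i})=x^{a_i-1}y^{b_i}\in S$. A general element of $\ker(\partial_0)$ is $\sum_i c_i e_{f_i}$ with $\sum_i c_i x^{a_i-1}y^{b_i}=0$ in $S$; since $S$ is multigraded it suffices to analyze homogeneous (monomial) relations. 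The goal is to show every such syzygy is an $S$-combination of the two families: the ``trivial/Koszul-type'' relations $e_{g_i}\mapsto x\cdot e_{f_i}$ coming from $x\cdot x^{a_i-1}y^{b_i}=x^{a_i}y^{b_i}=0$ in $S$, and the ``staircase'' relations $e_{g_i'}\mapsto y^{b_{i+1}-b_i}\cdot e_{f_i}$ coming from the fact that $y^{b_{i+1}-b_i}\cdot x^{a_i-1}y^{b_i}=x^{a_i-1}y^{b_{i+1}}$ is divisible by the generator $x^{a_{i+1}}y^{b_{i+1}}$ of $M$ precisely because $a_{i+1}\le a_i-1$, hence lies in $M$ and is zero in $S$.

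First I would record that both proposed families do lie in $\ker(\partial_0)$: $\partial_0(\partial_1(e_{g_i}))=x^{a_i}y^{b_i}=0$ since $m_i\in M$, and $\partial_0(\partial_1(e_{g_i'}))=x^{a_i-1}y^{b_{i+1}}$, which is divisible by $x^{a_{i+1}}y^{b_{i+1}}$ (using $a_{i+1}<a_i$, so $a_{i+1}\le a_i-1$), hence lies in $M$ and vanishes in $S$. Next, for the spanning claim, take a monomial syzygy: after grouping by multidegree it reduces to relations among monomials $x^{a_i-1+p}y^{b_i+q}$ that become equal (or zero) in $S$. The single-term case $c_i x^{a_i-1}y^{b_i}\cdot(\text{monomial}) = 0$ in $S$ means $x^{a_i-1+p}y^{b_i+q}\in M$, i.e.\ it is divisible by some generator $m_j=x^{a_j}y^{b_j}$; I would argue one can then peel off either an $x$ (reducing to an $e_{g_i}$ multiple) or enough powers of $y$ to cross into the next step of the staircase (reducing to an $e_{g_i'}$ multiple), by induction on the exponent. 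The two-term case $x^{a_i-1+p}y^{b_i+q}=x^{a_j-1+p'}y^{b_j+q'}\neq 0$ in $S$ forces the monomials to be literally equal in $R$; solving $a_i-1+p=a_j-1+p'$ and $b_i+q=b_j+q'$ for consecutive-ish indices and telescoping along $i,i+1,\dots,j$ expresses the relation as a sum of $e_{g_k'}$-multiples for $i\le k<j$, analogous to how the Taylor/staircase complex of a monomial ideal in two variables has only ``adjacent'' first syzygies. Finally I would confirm minimality: each $\partial_1(e_{g_i}),\partial_1(e_{g_i'})$ has all entries in the maximal ideal $\mm$, and a count of multidegrees (the $e_{g_i}$ sit in degree $a_i+b_i$, the $e_{g_i'}$ in degree $a_i-1+b_{i+1}$, and these are pairwise distinct generators) shows no generator is redundant.

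The main obstacle I anticipate is the careful case analysis in the two-term (and more generally multi-term) spanning argument — showing that a monomial relation $\sum c_i x^{a_i-1}y^{b_i}\cdot u_i = 0$ in $S$, where the nonzero terms must pairwise cancel in $R$, decomposes cleanly into the staircase syzygies. One must handle the boundary behavior at $i=r$ (and the extra subtlety when $a_r=0$, where the ``last'' $M_x$-generator is $x^{a_{r-1}-1}y^{b_{r-1}}$ rather than anything involving $b_r$) and make sure the indexing of the $e_{g_i'}$ family stopping at $r-1$ is exactly right. A clean way to organize this is to observe that $M_x$ itself, as a monomial ideal of $R$ pulled back, is again an ideal ``in two variables'' whose generators form a staircase with $a$-coordinates $a_i-1$ strictly decreasing and $b$-coordinates $b_i$ strictly increasing, so its minimal first syzygies over $R$ are the $r-1$ ``consecutive'' Taylor syzygies $\operatorname{lcm}(m_i',m_{i+1}')/m_i' = y^{b_{i+1}-b_i}$ on $e_{f_i}$ and $\operatorname{lcm}(m_i',m_{i+1}')/m_{i+1}'=x^{(a_i-1)-(a_{i+1}-1)}$ on $e_{f_{i+1}}$; passing to $S=R/M$ contributes exactly the additional relations $x^{a_i}y^{b_i}=0$, i.e.\ the $e_{g_i}$. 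Making this ``base change'' precise — that $\operatorname{Syz}_1^S(M_x)$ is generated by $\operatorname{Syz}_1^R(M_x)\otimes S$ together with the relations $\{m_i \cdot e_{f_i}=0\}$ — and then simplifying the resulting generating set to the stated minimal one is where the real work lies.
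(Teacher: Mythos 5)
Your proposal follows the same route as the paper's proof: classify monomial syzygies of $M_x$ into single-term relations (the paper's ``Type II'') and two-term relations (``Type I''), verify the two proposed families kill the map, show every single-term relation reduces to an $e_{g_i}$- or $e_{g_i'}$-multiple by peeling off $x$ or $y^{b_{i+1}-b_i}$, and confirm minimality by a multidegree count. One simplification you could make explicit, which neither you nor the paper states outright: for $M_x$ the two-term case is actually vacuous. If $\mu_i\cdot x^{a_i-1}y^{b_i}=\mu_j\cdot x^{a_j-1}y^{b_j}\neq 0$ in $S$ with $i<j$, then both $x^{a_i-1}y^{b_i}$ and $x^{a_j-1}y^{b_j}$ divide the common monomial, so $x^{a_i-a_j}\mid\mu_j$, forcing the common value to be a multiple of $x^{a_j}y^{b_j}=m_j\in M$ --- a contradiction. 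Hence every syzygy is a sum of single-term syzygies, which is exactly why the one-term generating set in the lemma suffices; your telescoping argument and the paper's ``rewrite Type I as Type II'' step are both handling a case that never arises nontrivially.

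Your alternative ``base change'' organization is sound and genuinely different in spirit, though it needs the care you flag. The precise statement is that $\ker(\partial_0)$ over $S$ is generated by the image of $\operatorname{Syz}_1^R(M_x)$ together with, for each $i$, the minimal generators of the colon ideal $\bigl(M:x^{a_i-1}y^{b_i}\bigr)\cdot e_{f_i}$; for the staircase those colon ideals are $(x,\,y^{b_{i+1}-b_i})$, so the extra relations are $x\cdot e_{f_i}$ \emph{and} $y^{b_{i+1}-b_i}\cdot e_{f_i}$, not only the $x\cdot e_{f_i}$ as you wrote. The Taylor relations $T_i=y^{b_{i+1}-b_i}e_{f_i}-x^{a_i-a_{i+1}}e_{f_{i+1}}$ then become redundant modulo the $x\cdot e_{f_{i+1}}$ (since $a_i-a_{i+1}\geq 1$), recovering the lemma's generating set. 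That reorganization buys a cleaner conceptual picture (Taylor complex of a staircase, then colon ideals for the extra kernel) at the cost of the extra bookkeeping you anticipated; the paper's direct case analysis avoids that overhead. Either way the boundary behavior at $i=r$ and the $a_r=0$ case need the check you mention, and your sketch is compatible with what the paper actually does.
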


\begin{proof}
Note that $M_x$ is minimally generated by the following monomials:
\begin{equation*}
M_x=\begin{cases}
(x^{a_1-1}y^{b_1},\ldots, x^{a_r-1}y^{b_r}) & \text{ if $a_r>0$,}\\
(x^{a_1-1}y^{b_1},\ldots, x^{a_{r-1}-1}y^{b_{r-1}}) & \text{ if $a_r=0$.}
\end{cases}
\end{equation*}
As $M_x$ is a monomial ideal over a monomial quotient ring, all sygyzies must take one of two forms:
\begin{align*}
m_i\cdot x^{a_i-1}y^{b_i}-m_j\cdot x^{a_j-1}y^{b_j}&=0\text{ for $m_i,m_j\neq 0\in S$}\;\;\;&\text{(Type I)}\;\;\\
m_i\cdot x^{a_i-1}y^{b_i}&=0\text{ for $m_i\neq 0\in S$}\;\;\;\;\;&\text{(Type II)}.
\end{align*}
As $x\cdot x^{a_i-1}y^{b_i}=0$ in $S$ for all generators $x^{a_i-1}y^{b_i}\in M_x$, we add the elements $\left\{e_{g_i}:\genfrac{}{}{0pt}{}{1\leq i\leq r \text{ if $a_r>0$}}{1\leq i\leq r-1 \text{ if $a_r=0$}}\right\}$ to a basis for $\Syz_1(M_x)$ with syzygy map $\partial_1(e_{g_i})=x\cdot e_{f_i}$. All syzygies of Type II with $x|m_i$ will be in the $S$-span of the image of $\partial_1$.

Consider any remaining syzygies of Type II not in this span, i.e. syzygies with $y^{d_i}\cdot x^{a_i-1}y^{b_i}=0$ where $x^{a_i-1}y^{b_i+(d_i-1)}\neq 0\in S$. Note that then $x^{a_i-1}y^{b_i+d_i}\in M$, and must in particular be divisible by $x^{a_i-1}y^{b_{i+1}}$. So we must have $y^{d_i}=y^{b_{i+1}-b_i}$, giving us new minimal syzygies of the forms $y^{b_{i+1}-b_i}\cdot e_{f_i}.$

\begin{figure}[h!]
\begin{center}
\begin{tikzpicture}[scale=0.4,auto=left,vertices/.style={circle, draw, fill=black, inner sep=1pt}, dotvertices/.style={circle, draw, fill=white, inner sep=1.5pt}]

\foreach \xcoord/\ycoord in {5/1,1/4} {
\fill[-,fill=black!15] (\xcoord,\ycoord)--(\xcoord,6)--(8,6)--(8,\ycoord)--(\xcoord,\ycoord);
\fill[-,fill=black!5] (\xcoord,6)--(8,6)--(8,\ycoord)--(8.5,\ycoord)--(8.5,6.5)--(\xcoord,6.5)--(\xcoord,6);
};

\foreach \yval in {0,1,2,3,4,5,6} {
	\draw[-,black!40] (0,\yval)--(8.5,\yval);
};
\foreach \xval in {0,1,2,3,4,5,6,7,8} {
	\draw[-,black!40] (\xval,0)--(\xval,6.5);
};

\foreach \xcoord/\ycoord in {6.5/3}{
\draw[black!15, fill=black!15, thick,rounded corners] (\xcoord-1.3,\ycoord-1.5) rectangle (\xcoord+1.3,\ycoord-0.5);
}

\foreach \xcoord/\ycoord in {4/5.9}{
\draw[black!15, fill=black!15, thick,rounded corners] (\xcoord-2.5,\ycoord-1.5) rectangle (\xcoord+1.3,\ycoord-0.5);
}

\foreach \xcoord/\ycoord in {2.5/3.4}{
\draw[white, fill=white, thick,rounded corners] (\xcoord-1.4,\ycoord-1.3) rectangle (\xcoord+1.5,\ycoord-0.7);
}

\foreach \xcoord/\ycoord in {2/1.6}{
\draw[white, fill=white, thick,rounded corners] (\xcoord-1.6,\ycoord-1.5) rectangle (\xcoord+1.6,\ycoord-0.5);
}

\draw[->] (4,1)--(4.8,1);
\node at (4.5,0.5){\tiny $x$};
\draw[->] (4,1)--(4,3.9);
\node at (2.5,2.4){\tiny $y^{b_{i+1}-b_i}$};

\node[vertices,label=above right:{\footnotesize $x^{a_i}y^{b_i}$}] at (5,1){};
\node[vertices,label=above right:{\footnotesize $x^{a_{i+1}}y^{b_{i+1}}$}] at (1,4){};
\node[dotvertices] at (4,1){};
\node at (2,0.6){\footnotesize $x^{a_i-1}y^{b_i}$};
\end{tikzpicture}
\end{center}
\caption{Syzygies of Type I on $M_x$ over $S$}\label{fig:syzygiesMx}
\end{figure}
Considering syzygies of Type I, we note that $\deg(m_i),\deg(m_j)\geq 1$. If $x|m_i$ or $x|m_j$, then we may rewrite one or both terms as sums of syzygies of Type II. The remaining term (if either $m_i$ or $m_j$ was a pure power of $y$) must be a syzygy in the $S$-span of $y^{b_{i+1}-b_i}\cdot e_{f_i}$. Hence all syzygies of $M_x$ must be in the $S$-span of the forms described in the statement of the Lemma.
\end{proof}

We now construct our third syzygy module $F_3$ over \(S\).

\begin{proposition}[Third Syzygy Module of \(\kk\) over \(S\)]\label{prop:syz3}  Let \(M=(x^{a_1}y^{b_1},...,x^{a_r}y^{b_r})\) be a monomial ideal given by its minimal set of generators, ordered so that \(a_1> a_2>\cdots>a_r\geq 0\) and \(0\leq b_1 <b_2<\cdots < b_r\).  Let either \(r>2\) or \(r=2\) and \(M\neq(x^n,y^m)\).  The third syzygy module of \(\kk\) over \(S=\kk[x,y]/M\) is of dimension \(3r-1\) with basis 
\[\bigl\{e_{c_i^x}\bigr\}_{i=1}^r\cup\bigl\{e_{c_i^y}\bigr\}_{i=1}^r\cup\bigl\{e_{d_i}\bigr\}_{i=1}^{r-1}.\]
\begin{enumerate}[{\bf Case }1:]
\item If all generators of \(M\) are divisible by \(x\) (i.e. \(a_r\geq 1\)), then the third syzygy map \(\partial_3\) is given by
\begin{align*}
\partial_3(e_{c_i^x})&=x\cdot e_{f_i} \hspace{95pt} \text{ for \(1\leq i\leq r\)}\\
\partial_3(e_{c_i^y})&=y\cdot e_{f_i}+x^{a_i-1}y^{b_i}\cdot e_{f_{r+1}} \hspace{10pt}\text{ for \(1\leq i\leq r\)}\\
\partial_3(e_{d_i})&=x^{a_i-1}y^{b_{i+1}-1}\cdot e_{f_{r+1}}\hspace{32pt}\text{ for \(1\leq i\leq r-1\)}.
\end{align*}
\item If the final generator of \(M\) is \(m_r=y^{b_r}\), then the third syzygy map \(\partial_3\) is given by
\begin{align*}
\partial_3(e_{c_i^x})&=\begin{cases} x\cdot e_{f_i} &\hspace{15pt}\text{for \(1\leq i\leq r-1\)}\\
x\cdot e_{f_r}-y^{b_r-1}\cdot e_{f_{r+1}} &\hspace{15pt}\text{for \(i=r\),}
\end{cases}\\
\partial_3(e_{c_i^y})&=\begin{cases}y\cdot e_{f_i}+x^{a_i-1}y^{b_i}\cdot e_{f_{r+1}} &\hspace{3pt} \text{for \(1\leq i\leq r-1\)}\\
y\cdot e_{f_{r}} &\hspace{3pt} \text{for \(i=r\),}
\end{cases}\\
\partial_3(e_{d_i})&=x^{a_i-1}y^{b_{i+1}-1}\cdot e_{f_{r+1}} \hspace{43pt}\text{ for \(1\leq i\leq r-1\)}.
\end{align*}
\end{enumerate}
\end{proposition}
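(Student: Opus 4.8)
The plan is to compute the third syzygy module $\ker(\partial_2)\subseteq F_2$ explicitly: I will produce a generating set for it, check that this set is minimal, and observe that it coincides with $\{\partial_3(e_{c_i^x})\}\cup\{\partial_3(e_{c_i^y})\}\cup\{\partial_3(e_{d_i})\}$, so that $F_3$ is the free module on the listed basis and $\partial_3$ is the stated map. The first, routine, step is to verify $\partial_2\circ\partial_3=0$, so that the listed elements really lie in $\ker(\partial_2)$; this is a monomial computation of the same flavor as the verification of $\partial_1\circ\partial_2=0$ following Proposition~\ref{prop:syz1and2}, using only the staircase inequalities $a_{i+1}\le a_i-1$ (so $x^{a_i-1}y^{b_{i+1}}\in M$) and $b_i\le b_{i+1}-1$ (so $x^{a_i}y^{b_{i+1}-1}\in M$).

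The heart of the argument is showing these elements span $\ker(\partial_2)$. Write a general element of $F_2$ as $\sum_{i=1}^{r}w_ie_{f_i}+w_{r+1}e_{f_{r+1}}$. In Case 1, applying $\partial_2$ gives $\bigl(\sum_{i=1}^r w_ix^{a_i-1}y^{b_i}-w_{r+1}y\bigr)e_x+w_{r+1}x\,e_y$, so the element is a syzygy exactly when $w_{r+1}x=0$ and $\sum_{i=1}^r w_ix^{a_i-1}y^{b_i}=w_{r+1}y$ in $S$. The first condition says $w_{r+1}\in M_x=0:(x)$, and by Lemma~\ref{lem:syzMx} $M_x$ is minimally generated by $x^{a_1-1}y^{b_1},\dots,x^{a_r-1}y^{b_r}$, so $w_{r+1}=\sum_{i=1}^r c_ix^{a_i-1}y^{b_i}$ for some $c_i\in S$. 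Substituting into the second condition yields $\sum_{i=1}^r(w_i-c_iy)x^{a_i-1}y^{b_i}=0$, i.e.\ $(w_i-c_iy)_{i}\in\Syz_1(M_x)$, which by Lemma~\ref{lem:syzMx} is generated by the vectors $x\cdot e_{f_i}$ ($1\le i\le r$) and $y^{b_{i+1}-b_i}\cdot e_{f_i}$ ($1\le i\le r-1$). Writing $w_i-c_iy=s_ix+t_iy^{b_{i+1}-b_i}$ (with $t_r:=0$) and reassembling, the general syzygy becomes $\sum_{i=1}^r c_i\partial_3(e_{c_i^y})+\sum_{i=1}^r s_i\partial_3(e_{c_i^x})+\sum_{i=1}^{r-1}t_i\bigl(y^{b_{i+1}-b_i}e_{f_i}\bigr)$, and the only leftover terms are handled by the identity $y^{b_{i+1}-b_i}e_{f_i}=y^{b_{i+1}-b_i-1}\partial_3(e_{c_i^y})-\partial_3(e_{d_i})$, valid since $b_{i+1}>b_i$. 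This proves Case 1.

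In Case 2 ($m_r=y^{b_r}$) the bookkeeping is parallel, but applying $\partial_2$ now yields the $e_y$-coordinate $w_ry^{b_r-1}+w_{r+1}x$, so the two syzygy conditions become $\sum_{i=1}^{r-1}w_ix^{a_i-1}y^{b_i}=w_{r+1}y$ and $w_ry^{b_r-1}=-w_{r+1}x$ in $S$. The second one couples $w_r$ and $w_{r+1}$ and is exactly what forces the modified formulas $\partial_3(e_{c_r^x})=x e_{f_r}-y^{b_r-1}e_{f_{r+1}}$ and $\partial_3(e_{c_r^y})=y e_{f_r}$; to unwind it I would use the $a_r=0$ case of Lemma~\ref{lem:syzMx} together with the explicit list of monomials of $S$ lying just below the corner $y^{b_r}$ of the staircase, then proceed as before.

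The last step is minimality. The listed set has $r+r+(r-1)=3r-1$ elements. Modulo $\mm F_2$, the images $\partial_3(e_{c_i^x})=x e_{f_i}$ and $\partial_3(e_{c_i^y})=y e_{f_i}+\cdots$ occupy linearly independent "slots" in the $e_{f_i}$-coordinate, while each $\partial_3(e_{d_i})=x^{a_i-1}y^{b_{i+1}-1}e_{f_{r+1}}$ involves only $e_{f_{r+1}}$ and the exponent vectors $(a_i-1,\,b_{i+1}-1)$ for $1\le i\le r-1$ are pairwise incomparable (first coordinates strictly decreasing, second coordinates strictly increasing), so no one of these monomials divides another. Hence the $3r-1$ images are $\kk$-linearly independent modulo $\mm\ker(\partial_2)$, so by Nakayama they form a minimal generating set of $\ker(\partial_2)$, giving $F_3\cong S^{3r-1}$; and since every $\partial_3(e_*)$ lies in $\mm F_2$, the resolution stays minimal. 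I expect the real obstacle to be Case 2: there the coupled relation $w_ry^{b_r-1}=-w_{r+1}x$ breaks the clean reduction to $M_x$ and its syzygies, and one has to argue directly with the monomials of $S$ near the top of the staircase; everything else is a careful but mechanical unwinding of Lemma~\ref{lem:syzMx}.
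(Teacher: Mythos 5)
Your Case~1 argument is correct and is in fact cleaner than the paper's: rather than the paper's branching on whether $g_{r+1}=0$ and then on whether $g_i=0$, you systematically use $M_x$ and Lemma~\ref{lem:syzMx}. Concretely, you write $w_{r+1}=\sum c_i x^{a_i-1}y^{b_i}$ using the minimal generators of $M_x$, observe the residual vector $(w_i-c_iy)_i$ lies in $\Syz_1(M_x)$, decompose it via Lemma~\ref{lem:syzMx} (which works componentwise because the generators of $\Syz_1(M_x)$ are concentrated in single $e_{f_i}$-slots), and then eliminate $y^{b_{i+1}-b_i}e_{f_i}$ with the identity $y^{b_{i+1}-b_i}e_{f_i}=y^{b_{i+1}-b_i-1}\partial_3(e_{c_i^y})-\partial_3(e_{d_i})$. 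The paper arrives at the same three families of generators and uses the same elimination identity, but through a case-by-case analysis of which coefficients vanish; your reduction to $M_x$ and its syzygy module is a genuine simplification of the bookkeeping.

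However, Case~2 is a real gap: you correctly identify that the relation $w_ry^{b_r-1}+w_{r+1}x=0$ couples $w_r$ and $w_{r+1}$ and that the two modified generators $\partial_3(e_{c_r^x})=xe_{f_r}-y^{b_r-1}e_{f_{r+1}}$ and $\partial_3(e_{c_r^y})=ye_{f_r}$ come from this coupling, but you do not actually unwind it. The paper handles this by splitting on $g_r=0$ versus $g_r\neq0$: when $g_r=0$ one reduces to the Case~1 argument on the first $r-1$ indices, and when $g_r\neq 0$ one shows the new syzygies are $ye_{f_r}$ (from $g_{r+1}=0$) and $xe_{f_r}-y^{b_r-1}e_{f_{r+1}}$ (from $g_{r+1}\in M_y$, $g_{r+1}y=0$). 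You should supply a comparable argument rather than deferring it.

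One smaller issue: the minimality paragraph reads ``Modulo $\mm F_2$, the images $\ldots$ occupy linearly independent slots,'' but every $\partial_3(e_\ast)$ lies in $\mm F_2$, so all images vanish modulo $\mm F_2$ and there is nothing to compare. What you want is a multigraded argument that no $\partial_3(e_\ast)$ is an $S$-linear combination of the others (the slot/incomparability observations are the right ingredients, and they should be phrased as: any relation $\sum a_\ast\partial_3(e_\ast)=0$ forces, by tracking the $e_{f_i}$- and $e_{f_{r+1}}$-components and comparing multidegrees, that all $a_\ast\in\mm$). To be fair, the paper does not spell out minimality either, but as written your reduction modulo $\mm F_2$ is not a valid step.
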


We wish to construct a generating set for all syzygies
$$g_1\cdot e_{f_1}+ g_2\cdot e_{f_2}+\cdots+g_re_{f_r}+g_{r+1}e_{f_{r+1}},$$
for $g_1,...,g_r,g_{r+1}\in S$, where any given syzygy must satisfy the equations
\begin{align}
\label{eqn:1syz3}\text{Case }1:\;\;\;\;& \left({\displaystyle \sum_{i=1}^{r}g_ix^{a_{i}-1}y^{b_i}}\right)-g_{r+1}y=0 \\
\label{eqn:2syz3}& \;\;\;\;\;\;\;\;\;\;\;\;\;\;\;\;\;\;\;\;\;\;\;\;\;\;\;\;\;\;\,g_{r+1}x=0\\
\label{eqn:3syz3}\text{Case }2:\;\;\;\;& \left({\displaystyle \sum_{i=1}^{r-1}g_ix^{a_{i}-1}y^{b_i}}\right)-g_{r+1}y=0 \\
\label{eqn:4syz3}& \;\;\;\;\;\;\;\;\;\;\;\;\;\;\;\,g_{r}y^{b_r-1}+g_{r+1}x=0.
\end{align}

\begin{proof}[Proof of Case 1]
In Case 1, Equation~\ref{eqn:2syz3} forces $g_{r+1}=0$ or $g_{r+1}\in M_x.$

\noindent If $g_{r+1}=0$, it suffices to find a basis for syzygies 
$$g_1\cdot e_{f_1}+ g_2\cdot e_{f_2}+\cdots+g_re_{f_r},$$
where $${\displaystyle \sum_{i=1}^{r}g_ix^{a_{i}-1}y^{b_i}}=0.$$
By Lemma~\ref{lem:syzMx}, we know that a basis for syzygies of this form is given by
$$\left\{x\cdot e_{f_i}\right\}_{i=1}^r\cup\left\{y^{b_{i+1}-b_i}\cdot e_{f_i}\right\}_{i=1}^{r-1}.$$
We will rewrite the syzygies $y^{b_{i+1}-b_i}\cdot e_{f_i}$ as sums of other basis elements later in the proof.

If $g_{r+1}\neq 0$, then $g_{r+1}\in M_x$. As these syzygies are the kernel of a map with monomial entries over a monomial quotient $S=R/M$, we must have minimal syzygies of the form
$$g_ix^{a_i-1}y^{b_i}-g_{r+1}y=0$$
for some $g_i\in S$ and $g_{r+1}\in M_x$.

If $g_i=0$, this forces $g_{r+1}\in M_x\cap M_y$, as then $x\cdot g_{r+1}=y\cdot g_{r+1}=0$. So $g_{r+1}=x^{a^i-1}y^{b_{i+1}-1}$. We have one such syzygy for each $g_{r+1}\in M_x\cap M_y$.

\begin{figure}[h!]
\begin{center}
\begin{tikzpicture}[scale=0.4,auto=left,vertices/.style={circle, draw, fill=black, inner sep=0.75pt}, dotvertices/.style={circle, draw, fill=white, inner sep=1pt},bigdotverticesy/.style={star, star points=4, draw, fill=black!30, inner sep=1.5pt},dotverticesy/.style={star, star points=4, draw, fill=black!30, inner sep=0.75pt}]

\foreach \xcoord/\ycoord in {1/12,2/11,5/10,6/8,10/6,13/1} {
\fill[-,fill=black!15] (\xcoord,\ycoord)--(\xcoord,14)--(16,14)--(16,\ycoord)--(\xcoord,\ycoord);
\fill[-,fill=black!5] (\xcoord,14)--(16,14)--(16,\ycoord)--(16.5,\ycoord)--(16.5,14.5)--(\xcoord,14.5)--(\xcoord,14);
};

\foreach \yval in {0,1,2,3,4,5,6,7,8,9,10,11,12,13,14} {
	\draw[-,black!40] (0,\yval)--(16.5,\yval);
};
\foreach \xval in {0,1,2,3,4,5,6,7,8,9,10,11,12,13,14,15,16} {
	\draw[-,black!40] (\xval,0)--(\xval,14.5);
};

\foreach \xcoord/\ycoord in {1/12,2/11,5/10,6/8,10/6,13/1} {
\node[vertices] at (\xcoord,\ycoord){};
}

\foreach \xcoord/\ycoord in {1/11,4/10,5/9,9/7,12/5}{
\node[dotvertices] at (\xcoord,\ycoord){};
\node[bigdotverticesy] at (\xcoord,\ycoord){};
}

\foreach \xcoord/\ycoord in {0/12,0/13,0/14,1/11,4/10,5/9,5/8,9/7,9/6,12/5,12/4,12/3,12/2,12/1}{
\node[dotvertices] at (\xcoord,\ycoord){};
}

\foreach \xcoord/\ycoord in {2/10,3/10,6/7,7/7,8/7,10/5,11/5,13/0,14/0,15/0,16/0}{
\node[dotverticesy] at (\xcoord,\ycoord){};
}

\node[dotvertices, label=right:{\footnotesize Elements of $M_x$}] at (18,8){};
\node[dotverticesy, label=right:{\footnotesize Elements of $M_y$}] at (18,6.5){};
\node[bigdotverticesy] at (18,5){};
\node[dotvertices, label=right:{\footnotesize Elements of $M_x\cap M_y$}] at (18,5){};
\node[vertices, label=right:{\footnotesize Generators of $M$}] at (18,9.5){};

\end{tikzpicture}
\end{center}
\caption{Diagram of $M_x, M_y$ for $M=(xy^{12},x^2y^{11},x^5y^{10},x^6y^8,x^{10}y^6,x^{13}y)$}\label{fig:pictureMx}
\end{figure}

Assuming $g_i\neq 0$ and $g_{r+1}\in M_x$ for some syzygy, we may without loss of generality consider only syzygies where $x\nmid g_i$ (as syzygies where $x|g_i$ are in the $S$-span of syzyzgies with $x\cdot e_{f_i}$ and $x^{a_i-1}y^{b_{i+1}-1}\cdot e_{f_{r+1}}$.)
Considering first syzygies with $g_i=y$, we have
$$y\cdot x^{a_i-1}y^b_i-g_{r+1}\cdot y=x^{a_i-1}y^{b_i+1}-g_{r+1}\cdot y=0,$$
giving us that $g_{r+1}=x^{a_i-1}y^{b_i-1}$.

Our minimal syzygies must be of the form
$$\partial_2(g_i\cdot e_{f_i}+x^{a_i-1}y^{b_{i+1}-1}\cdot e_{f_{r+1}})=g_i\cdot x^{a_i-1}y^{b_i}-x^{a_i-1}y^{b_{i+1}}=0.$$
So $g_i=y$ and $g_{r+1}=x^{a_i-1}y^{b_i}$.

Finally, we remove all elements $y^{b_{i+1}-b_i}\cdot e_{f_i}$ from our generating set for syzygies $g_1e_{f_1}+\cdots+g_re_{f_r}+g_{r+1}e_{f_{r+1}}$, by noting that each of these are in the $S$-span of elements $y\cdot e_{f_i}+x^{a_i-1}y^{b_i}\cdot e_{f_{r+1}}$ and $x^{a_i-1}y^{b_{i+1}-1}\cdot e_{f_{r+1}}$:
\begin{align*}
y^{b_{i+1}-b_i}\cdot e_{f_i}&=y^{b_{i+1}-b_i-1}\left(y\cdot e_{f_i}+x^{a_i-1}y^{b_i}\cdot e_{f_{r+1}}-x^{a_i-1}y^{b_i}\cdot e_{f_{r+1}}\right)\\
&=y^{b_{i+1}-b_i-1}\left(y\cdot e_{f_i}+x^{a_i-1}y^{b_i}\cdot e_{f_{r+1}}\right)-y^{b_{i+1}-b_i-1}\left(x^{a_i-1}y^{b_i}\cdot e_{f_{r+1}}\right)\\
&=y^{b_{i+1}-b_i-1}\left(y\cdot e_{f_i}+x^{a_i-1}y^{b_i}\cdot e_{f_{r+1}}\right)-x^{a_i-1}y^{b_{i+1}-1}\cdot e_{f_{r+1}}
\end{align*}
Our third syzygy module is then generated by $e_{c_i^x}$, $e_{c_i^y}$, and $e_{d_i}$, with the described $\partial_3$.
\end{proof}
\begin{proof}[Proof of Case 2]
In Case 2, consider separately the cases where $g_r=0$ and $g_r\neq 0$. If $g_r=0$, then we have the equations
\begin{align}
\left({\displaystyle \sum_{i=1}^{r-1}g_ix^{a_{i}-1}y^{b_i}}\right)-g_{r+1}y&=0\\
g_{r+1}x&=0.
\end{align}
Using Lemma~\ref{lem:syzMx} and recreating the proof on Case 1,  we have that a generating set for all syzygies mapping to $S^{r+1}$ under $\partial_3$ with $g_r=0$ are of the forms:
\begin{align*}
\partial_3(e_{c_i^x})&=x\cdot e_{f_i} \hspace{95pt} \text{ for \(1\leq i\leq r-1\)}\\
\partial_3(e_{c_i^y})&=y\cdot e_{f_i}+x^{a_i-1}y^{b_i}\cdot e_{f_{r+1}} \hspace{10pt}\text{ for \(1\leq i\leq r-1\)}\\
\partial_3(e_{d_i})&=x^{a_i-1}y^{b_{i+1}-1}\cdot e_{f_{r+1}}\hspace{32pt}\text{ for \(1\leq i\leq r-1\)}.
\end{align*}
If $g_r\neq 0$, then as we are resolving a monomial ring over a monomial quotient our syzygies must satisfy either
\begin{enumerate}[(i)]
\item $g_{r+1}=0$, $g_ix^{a_i-1}y^{b_i}=0$, and $g_ry^{b_r-1}=0$, or
\item $g_{r+1}\neq 0$, $g_{r+1}y=0$, and $g_ry^{b_r-1}-g_{r+1}x=0$.
\end{enumerate}
If condition (i) holds, we have the syzygy $g_i=0$ and $g_r=y$ generates all such new syzygies (all syzygies with $g_i\neq 0$ may be rewritten as sums of $y\cdot e_{r}$ and $x\cdot e_{f_i}$ for $1\leq i\leq r-1$.) In the second case we have $g_{r+1}\in M_y$, and for $g_r\neq 0$, we have new minimal syzygy $x\cdot e_{f_r}-y^{b_r-1}\cdot e_{f_{r+1}}.$ The proof that adding these two syzygies to our generating set gives us a complete $S$-spanning set for syzygies is similar to that given in Case 1 and omitted here.

As in Case 1, our third syzygy module has been shown to be generated by $e_{c_i^x}$, $e_{c_i^y}$, and $e_{d_i}$, with the described $\partial_3$.
\end{proof}

A careful examination of the degrees of the map $\partial_4$ produces the appropriate twists for $F_3$ in a graded resolution:

\[F_3=\left(\bigoplus_{1\leq i\leq r} S(-a_i-b_i-1)^2\right)\oplus\left(\bigoplus_{1\leq i\leq r-1} S(-a_i-b_{i+1})\right)\cong S^{3r-1}\] 

\section{Fourth Syzygy Modules in Main Case}

Finally, we construct our fourth syzygy module $F_4$ and fourth syzygy map \(\partial_4\) in a minimal free resolution of \(\kk\) over \(S\).

\begin{proposition}[Fourth Syzygy Module of \(\kk\) over \(S\)]\label{prop:syz4} Let \(M=(x^{a_1}y^{b_1},...,x^{a_r}y^{b_r})\) be a monomial ideal given by its minimal set of generators, ordered so that \(a_1> a_2>\cdots>a_r\geq 0\) and \(0\leq b_1 <b_2<\cdots < b_r\).  Let either \(r>2\) or \(r=2\) and \(M\neq(x^n,y^m)\). The fourth syzygy module \(F_4\cong F_2^r\oplus F_1^{r-1}\) and \(\partial_4=\partial_2^r\oplus\partial_1^{r-1}\). We may choose as a generating set
\[\bigl\{e_{h_j^x},e_{h_j^y}\bigr\}_{j=1}^{r-1}\cup \biggl\{\bigl\{e_{k_{ij}}\bigr\}_{i=1}^{r+1}\biggr\}_{j=1}^r.\]
\begin{enumerate}[{\bf Case }1:]
\item If all generators of \(M\) are divisible by \(x\) (i.e. \(a_r\geq 1\)), then the fourth syzygy map \(\partial_4\) is given by
\begin{align*}
\partial_4(e_{h_j^x})&=x\cdot e_{d_j} \hspace{95pt} \text{ for \(1\leq j\leq r-1\)}\\
\partial_4(e_{h_j^y})&=y\cdot e_{d_j} \hspace{95pt} \text{ for \(1\leq j\leq r-1\)}\\
\partial_4(e_{k_{ij}})&=\begin{cases}
x^{a_i-1}y^{b_i}\cdot e_{c_{j}^x} &\hspace{20pt}\text{ for \(1\leq i,j\leq r\)}\\
-y\cdot e_{c_j^x}+x\cdot e_{c_j^y} &\hspace{20pt}\text{ if $i=r+1$ and \(1\leq j\leq r\)}.
\end{cases}
\end{align*}
\item If the final generator of \(M\) is \(m_r=y^{b_r}\), then the fourth syzygy map \(\partial_4\) is given by
\begin{align*}
\partial_4(e_{h_j^x})&=x\cdot e_{d_j} \hspace{95pt} \text{ for \(1\leq j\leq r-1\)}\\
\partial_4(e_{h_j^y})&=y\cdot e_{d_j} \hspace{95pt} \text{ for \(1\leq j\leq r-1\)}\\
\partial_4(e_{k_{ij}})&=\begin{cases}
x^{a_i-1}y^{b_i}\cdot e_{c_{j}^x} &\hspace{20pt}\text{ for \(1\leq i\leq r-1\) and \(1\leq j\leq r\)}\\
y^{b_r-1}\cdot e_{c_{j}^y} &\hspace{20pt}\text{ for \(i=r\) and \(1\leq j\leq r\)}\\
-y\cdot e_{c_j^x}+x\cdot e_{c_j^y} &\hspace{20pt}\text{ if $i=r+1$ and \(1\leq j\leq r\)}.
\end{cases}
\end{align*}
\end{enumerate}
\end{proposition}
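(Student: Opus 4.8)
The plan is to compute $\ker(\partial_3)$ from the formulas in Proposition~\ref{prop:syz3}, doing Case 1 in full (Case 2 differs only in that $\partial_2(e_{f_r})=y^{b_r-1}e_y$ replaces $x^{a_r-1}y^{b_r}e_x$ throughout, and the kernel relations pick up a harmless extra term $-\alpha_r y^{b_r-1}e_{f_{r+1}}$ that disappears as soon as the $e_c$-components are cancelled below). Write $z=\sum_{i=1}^r\alpha_i e_{c_i^x}+\sum_{i=1}^r\beta_i e_{c_i^y}+\sum_{i=1}^{r-1}\gamma_i e_{d_i}\in F_3$; applying $\partial_3$ and collecting coefficients of the basis $e_{f_1},\dots,e_{f_{r+1}}$ of $F_2$ shows that $z\in\ker(\partial_3)$ precisely when $\alpha_i x+\beta_i y=0$ in $S$ for all $1\le i\le r$ and $\sum_{i=1}^r\beta_i x^{a_i-1}y^{b_i}+\sum_{i=1}^{r-1}\gamma_i x^{a_i-1}y^{b_{i+1}-1}=0$ in $S$. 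I would first verify $\partial_3\circ\partial_4=0$ by substituting the formulas of the statement into those of Proposition~\ref{prop:syz3}: in each case the resulting monomial is divisible by a generator $x^{a_i}y^{b_i}$ of $M$ (using $b_{i+1}-1\ge b_i$ and $a_i-1\ge a_{i+1}$) and so vanishes in $S$. This gives $\mathrm{im}(\partial_4)\subseteq\ker(\partial_3)$.

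For the reverse containment I would run a two-step reduction. Given $z\in\ker(\partial_3)$, the relation $\alpha_i x+\beta_i y=0$ says $\alpha_i e_x+\beta_i e_y\in\ker(\partial_1)=\mathrm{im}(\partial_2)$ by Proposition~\ref{prop:syz1and2}, so $\alpha_i e_x+\beta_i e_y=\sum_{k=1}^{r+1}t_{ik}\,\partial_2(e_{f_k})$ for some $t_{ik}\in S$. For fixed $j$, $\partial_4$ sends the generators $e_{k_{1j}},\dots,e_{k_{r+1,j}}$ to $\partial_2(e_{f_1}),\dots,\partial_2(e_{f_{r+1}})$ with $e_x,e_y$ replaced by $e_{c_j^x},e_{c_j^y}$; hence $w:=\sum_{i=1}^r\sum_{k=1}^{r+1}t_{ik}e_{k_{ki}}$ satisfies $\partial_4(w)=\sum_{i=1}^r\bigl(\alpha_i e_{c_i^x}+\beta_i e_{c_i^y}\bigr)$, so $z-\partial_4(w)=\sum_{i=1}^{r-1}\gamma_i e_{d_i}$: the $e_c$-parts cancel exactly and the $\gamma_i$ are unchanged. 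Since $\partial_4(w)\in\ker(\partial_3)$, the residue $z':=\sum_i\gamma_i e_{d_i}$ lies in $\ker(\partial_3)$, which by the second relation means $\sum_{i=1}^{r-1}\gamma_i x^{a_i-1}y^{b_{i+1}-1}=0$ in $S$.

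The key observation for the second step is that the monomials $n_i:=x^{a_i-1}y^{b_{i+1}-1}$, $1\le i\le r-1$, are socle elements: each is nonzero in $S$ (no generator $x^{a_j}y^{b_j}$ can divide $n_i$, since $a_j\le a_i-1$ forces $j\ge i+1$ while $b_j\le b_{i+1}-1$ forces $j\le i$) and each lies in $M_x\cap M_y$ (because $x^{a_i}y^{b_i}\mid x n_i$ and $x^{a_{i+1}}y^{b_{i+1}}\mid y n_i$), so $\mm n_i=0$. Writing $\gamma_i=c_i+\gamma_i'$ with $c_i\in\kk$ and $\gamma_i'\in\mm$ gives $\gamma_i n_i=c_i n_i$, and since the $n_i$ are pairwise distinct monomials outside $M$ they are $\kk$-linearly independent in $S$; thus $\sum c_i n_i=0$ forces every $c_i=0$, i.e. $\gamma_i\in\mm$ for all $i$. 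Writing $\gamma_i=x p_i+y q_i$ then exhibits $z'=\partial_4\bigl(\sum_i p_i e_{h_i^x}+\sum_i q_i e_{h_i^y}\bigr)$, so $z\in\mathrm{im}(\partial_4)$ and $\ker(\partial_3)=\mathrm{im}(\partial_4)$.

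Finally, minimality and twists: every nonzero entry of $\partial_4$ is $x$, $y$, some $x^{a_i-1}y^{b_i}$, or $y^{b_r-1}$, and none is a unit, since $x^{a_i-1}y^{b_i}=1$ would mean $m_i=x$ and $y^{b_r-1}=1$ would mean $m_r=y$, either of which forces $M=(x,y^m)$ or $(x^n,y)$, generated by pure powers and excluded by hypothesis. Hence $\partial_4(F_4)\subseteq\mm F_3$ and the resolution stays minimal. Comparing the degrees of the entries of $\partial_4$ with the graded twists of $F_3$ from Proposition~\ref{prop:syz3} identifies $F_4$, for fixed $j$, as $F_2(-a_j-b_j)$ on the generators $e_{k_{ij}}$ (mapping by a copy of $\partial_2$ into $S e_{c_j^x}\oplus S e_{c_j^y}$) and as $F_1(-a_j-b_{j+1})$ on $e_{h_j^x},e_{h_j^y}$ (mapping by a copy of $\partial_1$ into $S e_{d_j}$), so $F_4\cong\bigoplus_{j=1}^r F_2(-a_j-b_j)\oplus\bigoplus_{j=1}^{r-1}F_1(-a_j-b_{j+1})$ and $\partial_4=\partial_2^r\oplus\partial_1^{r-1}$. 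I expect the main obstacle to be the bookkeeping in the reduction: confirming that subtracting $\partial_4(w)$ cancels the $e_c$-part while leaving $\sum\gamma_i e_{d_i}$ intact, and recognizing the residual map $e_{d_i}\mapsto n_i$ as a direct sum of $r-1$ copies of the natural surjection $S\to\kk$, so that its kernel is generated by $\{x e_{d_i},y e_{d_i}\}_{i=1}^{r-1}$; with these identifications Case 2 goes through verbatim, the $i=r$ block of $\partial_4$ using $y^{b_r-1}e_{c_j^y}$ in place of $x^{a_r-1}y^{b_r}e_{c_j^x}$ to match $\partial_2$ in Case 2.
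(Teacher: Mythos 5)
Your proof is correct, and it takes a genuinely different route from the paper's. The paper computes $\ker(\partial_3)$ head-on: it produces the $e_{d_j}$-syzygies directly from $\mathfrak m\,n_j=0$, then invokes a general claim that syzygies over a monomial quotient are supported on at most two basis elements to reduce to analyzing each pair $(e_{c_j^x},e_{c_j^y})$ separately, and finally classifies the possible $(g_j^x,g_j^y)$ by cases. You instead proceed by reduction modulo $\mathrm{im}(\partial_4)$: the coordinatewise relations $\alpha_i x+\beta_i y=0$ let you identify the $e_c$-component of a kernel element with an element of $\ker(\partial_1)=\mathrm{im}(\partial_2)$ and cancel it with an explicit preimage $w$ built from Proposition~\ref{prop:syz1and2}; the residual $\sum\gamma_i e_{d_i}$ is then handled by noting that the $n_i=x^{a_i-1}y^{b_{i+1}-1}$ are nonzero socle monomials, pairwise distinct and outside $M$, so that $\sum\gamma_i n_i=0$ forces $\gamma_i\in\mathfrak m$. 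Both arguments land in the same place, but yours makes the decomposition $F_4\cong F_2^r\oplus F_1^{r-1}$ structurally transparent --- the $F_2$-blocks arise because the relations on $(e_{c_j^x},e_{c_j^y})$ replicate those on $(e_x,e_y)$, and the $F_1$-blocks because each $e_{d_i}$ hits a socle element --- and it avoids the paper's somewhat loosely stated appeal to pairwise-supported syzygies, replacing it with the cleaner observation $\ker(\partial_1)=\mathrm{im}(\partial_2)$. Your handling of Case~2, the minimality check, and the determination of the graded twists all match the paper's.
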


\begin{proof}[Proof of Case 1]
From Proposition~\ref{prop:syz3}, our third syzygy module $F_3$ had generators
\[\bigl\{e_{c_i^x}\bigr\}_{i=1}^r\cup\bigl\{e_{c_i^y}\bigr\}_{i=1}^r\cup\bigl\{e_{d_i}\bigr\}_{i=1}^{r-1},\]
where the $\partial_3$ was given by
\begin{align*}
\partial_3(e_{c_i^x})&=x\cdot e_{f_i} \hspace{95pt} \text{ for \(1\leq i\leq r\)}\\
\partial_3(e_{c_i^y})&=y\cdot e_{f_i}+x^{a_i-1}y^{b_i}\cdot e_{f_{r+1}} \hspace{10pt}\text{ for \(1\leq i\leq r\)}\\
\partial_3(e_{d_i})&=x^{a_i-1}y^{b_{i+1}-1}\cdot e_{f_{r+1}}\hspace{32pt}\text{ for \(1\leq i\leq r-1\)}.
\end{align*}
Beginning with syzygies on the $e_{d_j}$ terms, we note that $x\cdot x^{a_j-1}y^{b_{j+1}-1}=y\cdot x^{a_j-1}y^{b_{j+1}-1}=0\in S$, so we have minimal generators $\{x\cdot e_{d_j},y\cdot e_{d_j}\}_{j=1}^{r-1}$. We now find a generating set for remaining syzygies \emph{only} involving the $e_{c_j^x},e_{c_j^y}$.
Given a syzygy $g_1^x\cdot e_{c_1^x}+g_1^y\cdot e_{c_1^y}+\cdots+g_r^y\cdot e_{c_r^x}+g_r^y\cdot e_{c_r^y}$ for $g_i^x,g_i^y\in S$, we must have
\begin{align*}
g_1^x(x\cdot e_{f_1})&+g_1^y(y\cdot e_{f_1}+x^{a_1-1}y^{b_1}\cdot e_{f_{r+1}})+\cdots\\
&+g_r^x(x\cdot e_{f_1})+g_r^y(y\cdot e_{f_r}+x^{a_r-1}y^{b_r}\cdot e_{f_{r+1}})=0,
\end{align*}
So we have the relations
\begin{align*}
g_1^xx&+g_1^yy=0,\\
g_2^xx&+g_2^yy=0,\\
\vdots \\
g_r^xx&+g_r^yy=0,\text{ and}\\
\sum_{i=1}^rg_i^y &x^{a_i-1}y^{b_i}=0.
\end{align*}
Note that all syzygies of the field over a monomial quotient are generated by either $m\cdot e_{\ast}$ for some generator $e_{\ast}\in F_3$ or by pairs $m_1\cdot e_{\ast,1}+m_2\cdot e_{\ast,2}$ for $e_{\ast,1},e_{\ast,2}$ generators of $F_3$. As $g_j^xx+g_j^yy=0$ must hold for all $j$, it will suffice to consider syzygies on pairs $e_{c_j^x}$ and $e_{c_j^x}$.

Fix a $j\in\{1,...,r\}$ and consider syzygies on $\{e_{c_j^x},e_{c_j^y}\}$. The equalities $g_j^x x+g_j^yy=0$ and $g_j^y x^{a_j-1}y^{b_j}=0$ hold. From the second equality, we must have $g_j^y=0$, $g_j^y=x$, or $g_j^y=y^{b_{j+1}-b_j}$ for a degree minimal relation. As $g_x^xx+y^{b_{j+1}-b_j+1}\neq 0$ for any $g_j^x$ (by $y^d\not\in M$ for any $d\in\NN$ in Case 1,) we must have either $g_j^y=0$ or $g_j^y=x$.

If $g_j^y=0$, then $g_j^x=0\in S$, or $g_j^x\in M_x$. This gives syzygies of the form $g_j^x=x^{a_i-1}y^{b_i}$ and $g_j^x=0$ for all $1\leq i\leq r$. If $g_j^y=x$, then $g_j^xx+xy=0$. The syzygy with $g_j^x=-y$ and $g_j^y=x$ is a minimal generator for all such syzygies.

So $\partial_4$ is of the form given in the statement of the theorem. Note that for each $1\leq j\leq r-1$, we have
\begin{align*}
F_4|_{\{e_{k_j^x},e_{h_j^y}\}}&\cong F_1\\
\partial_4|_{\{e_{h_j^x},e_{h_j^y}\}}&\cong \partial_1,
\end{align*}
and for each $1\leq j\leq r$ we have
\begin{align*}
F_4|_{\{e_{k_{ij}}\}_{i=1}^{r+1}}&\cong F_2\\
\partial_4|_{\{e_{k_{ij}}\}_{i=1}^{r+1}}&\cong \partial_2.
\end{align*}
So we have $F_4=F_1^{r-1}\oplus F_2^r$ and $\partial_4=\partial_1^{r-1}\oplus\partial_2^r$, completing our proof.
\end{proof}

\begin{proof}[Proof of Case 2]
In Case 2, the map $\partial_3$ is given by
\begin{align*}
\partial_3(e_{c_i^x})&=\begin{cases} x\cdot e_{f_i} &\hspace{15pt}\text{for \(1\leq i\leq r-1\)}\\
x\cdot e_{f_r}-y^{b_r-1}\cdot e_{f_{r+1}} &\hspace{15pt}\text{for \(i=r\),}
\end{cases}\\
\partial_3(e_{c_i^y})&=\begin{cases}y\cdot e_{f_i}+x^{a_i-1}y^{b_i}\cdot e_{f_{r+1}} &\hspace{3pt} \text{for \(1\leq i\leq r-1\)}\\
y\cdot e_{f_{r}} &\hspace{3pt} \text{for \(i=r\),}
\end{cases}\\
\partial_3(e_{d_i})&=x^{a_i-1}y^{b_{i+1}-1}\cdot e_{f_{r+1}} \hspace{43pt}\text{ for \(1\leq i\leq r-1\)}.
\end{align*}
The proof in Case 2 is identical to that in Case 1 for $\partial_4(e_{h_j^x})$ and $\partial_4(e_{h_j^y})$. When constructing $\partial_4(e_{k_{ij}})$ in Case 2, for any syzygies we have the equalities
\begin{align*}
g_1^xx+g_1^yy&=0,\\
g_2^xx+g_2^yy&=0,\\
\vdots \\
g_{r}^xx+g_{r}^yy&=0,\text{ and}\\
\left(\sum_{i=1}^{r-1}g_i^y x^{a_i-1}y^{b_i}\right)+g_r^xy^{b_r-1}&=0.
\end{align*}
After fixing a $j\in\{1,...,r\}$ and considering syzygies on pairs $\{e_{c_j^x},e_{c_j^y}\}$, we have minimal syzygies of the forms
\[\left\{x^{a_i-1}y^{b_i}\cdot e_{c_j^x}\right\}_{i=1}^{r-1}\bigcup\left\{y^{b_r-1}\cdot e_{c_j^y}\right\}\bigcup\left\{-y\cdot e_{c_j^x}+x\cdot e_{c_j^y}\right\},\]
via arguments similar to those in Case 1. As in Case 1, we now have that $\partial_4$ is of the form given in the statement of the theorem, with $F_4\cong F_1^{r-1}\oplus F_2^r$ and $\partial_4=\partial_1^{r-1}\oplus\partial_2^r$.
\end{proof}

\section{Proof of Main Theorem and Corollary~\ref{cor:main}}

We now return to the main proof of our theorem.

\mainthm

\begin{proof}
From Propositions~\ref{prop:syz1and2},~\ref{prop:syz3}, and~\ref{prop:syz4}, we  have that the first four stages of the resolution of $\kk$ over $S$ may be written as
\begin{equation*}
{\mathcal F}:\;\;\;\kk\longleftarrow S\longleftarrow F_1\longleftarrow F_2\longleftarrow F_3\longleftarrow\mathop{\oplus}^{\displaystyle F_2^r}_{\displaystyle F_1^{r-1}}.
\end{equation*}
We prove inductively that we may decompose the $F_{i}$ syzygy module in our resolution to a direct sum of the form $F_{1}^{u_i}\oplus F_{2}^{v_i}\oplus F_3^{w_i}$.

For $F_4$, we have by Proposition~\ref{prop:syz4} that $F_4=F_1^{r-1}\oplus F_2^{r}\oplus F_3^0$. Let $F_i=F_1^{u_i}\oplus F_2^{v_i}\oplus F_3^{w_i}$ be the $i^{\text{th}}$-syzygy module of $\kk$ over $S$ for $i\geq 4$. Then a resolution of each component of the direct sum is given by the modules and maps
\begin{align*}
F_1^{u_i}&\xleftarrow{\;\;(\partial_2)^{u_i}\;\;} F_2^{u_i}\\
F_2^{v_i}&\xleftarrow{\;\;(\partial_3)^{v_i}\;\;} F_3^{v_i}\\
F_3^{w_i}&\xleftarrow{\;\;(\partial_1)^{(r-1)w_i}\oplus(\partial_2)^{rw_i}\;\;} F_1^{(r-1)w_i}\oplus F_2^{rw_i},
\end{align*}
so the $(i+1)^{\text{st}}$ syzygy module is $F_{i+1}=F_1^{(r-1)w_i}\oplus F_2^{u_i+rw_i}\oplus F_3^{v_i}$.
\end{proof}

\maincor

\begin{proof}
From Propositions~\ref{prop:syz1and2} and \ref{prop:syz3}, we have that 
\begin{align*}
\beta_2^S(\kk)&=r+1=2+(r-1)\text{ and}\\
\beta_3^S(\kk)&=3r-1= (r+1)+2\cdot(r-1).
\end{align*}
To calculate $\beta_i^S(\kk)$ for $i\geq 4$, we note that $F_4=F_2^r\oplus F_1^{r-1}$. So the $i^{\text{th}}$ stage of the resolution for $i\geq 4$ may be rewritten as $F_i=F_{i-2}^{r}\oplus F_{i-3}^{r-1}$, giving us a third-order linear recursion formula,
\[\beta_i^S(\kk)=r\beta_{i-2}^S(\kk)+(r-1)\beta_{i-3}^S(\kk),\]
or equivalently, the second-order linear recursion,
\[\beta_i^S(\kk)=\beta_{i-1}^S(\kk)+(r-1)\beta_{i-2}^S(\kk).\]

Our Poincar\'{e}-Betti series then is
\[P_S(z)=\frac{(1+z)^2}{1-rz^2+(1-r)z^3}=\frac{1+z}{1-z+(1-r)z^2}.\]
\end{proof}

\section{Conclusions and Future Work}
Between Theorem~\ref{thm:main} and Appendix~\ref{sec:degenerate}, we have a complete classification of all resolutions of $\kk$ over monomial quotient rings $S=\kk[x,y]/M$. Note that interestingly, the recursion formula appearing in the actual construction of syzygy modules \(F_i\) is
\begin{equation*}
F_i=(F_{i-2})^r\oplus(F_{i-3})^{r-1}\text{ if \(i\geq 4\).}
\end{equation*}
This third-order linear recursion formula matches the Poincar\'{e}-Betti series denominator calculated in~\cite{MR2188858},
\[P_S(z)=\frac{(1+z)^2}{1-rz^2+(1-r)z^3}=\frac{(1+z)^2}{b_S(z)},\]
where $b_S(z)$ has been interpreted combinatorially in terms of dimensions of the homologies of intervals in the lcm-lattice of $M$.

Whether all resolutions of $\kk$ over $S=\kk[x_1,...,x_n]/M$ for monomial ideals $M$ generated in degree 2 or higher, not all pure powers, can be decomposed via a similar recursive formula for $F_i$ is unknown. All monomial ideals $M$ in $\kk[x,y,z]$ and $\kk[x,y,z,w]$ that the author has examined so far, however, have had decompositions of their syzygy modules $F_i$ matching the formula $b_S(z)$ for the denominator of the Poincar\'{e}-Betti series produced in~\cite{MR2188858} and~\cite{MR2290910}.

\begin{openquestion}[Resolutions of $\kk$ over Monomial Quotient Rings] Let $M=(m_1,...,m_n)\subset R=\kk[x_1,...,x_k]$ with $\deg(m_i)\geq 2$ for all $i$ and \emph{not} all $m_i$ pure powers of the variables. Let $b_S(z)$ be the Poincar\'{e}-Betti denominator of $\kk$ over $S=$ produced in~\cite{MR2188858} and~\cite{MR2290910}, setting $b_S(z)=1-c_2z^2-\cdots-c_dz^d$. Does there exist an $n\geq d+1$ and a decomposition of $F_i$ such that $F_i=\left(F_{i-2}\right)^{c_2}\oplus\cdots\oplus \left(F_{i-d}\right)^{c_d}$ for all $i\geq n$?
\end{openquestion}

Thanks to Brandon Stone and Courtney Gibbons for noting that the examples $M=(x^{\alpha},y^{\beta},z^{\gamma})$ for $\alpha,\beta,\gamma\geq 1$ are degenerate cases similar to those in Appendix~\ref{sec:degenerate}. In particular, this conjecture excludes complete intersections $M=(x_{i_1}^{\alpha_1},x_{i_2}^{\alpha_2},...,x_{i_r}^{\alpha_r})\subseteq \kk[x_1,...,x_k]=R$ and $S=R/M$, as the Poincar\'{e}-Betti series of $\kk$ over $S$ in these cases are \emph{not} of the form $b_S(z)=1-c_2z^2-\cdots-c_dz^d.$

\section*{Acknowledgements} 
The author would like to thank James Parson for many interesting conversations on infinite free resolutions and Poincar\'{e} series, and to thank Neil Epstein for initially suggesting this problem. The author was partially supported by a Hood College Board of Associates (BOA) grant for Summer 2013.

\appendix

\section{Appendix: Resolutions in Degenerate Cases}\label{sec:degenerate}
The five degenerate cases in resolutions of \(k\) over \(S=\kk[x,y]/M\) are described here, along with the the syzygies modules and the attaching maps of resolutions of \(\kk\) over \(S=\kk[x,y]/M\) in each case. Proofs are omitted.

\begin{enumerate}[{Type }I:]
\item \(M=(x)\) (or \(M=(y)\)),
\[{\mathcal F}:\;\;\;\kk\longleftarrow S\xleftarrow{\;\;\;\;(x)\;\;\;\;}S(-1)\longleftarrow 0.\]
\item \(M=(x^ay^b)\) where \(a+b\geq 2\),
\begin{align*}{\mathcal F}:\;\;\;\kk\longleftarrow S&\xleftarrow{\;\;(x\;y)\;\;} S^2\xleftarrow{\begin{pmatrix}
-y & x^{a-1}y^b\\
x &0
\end{pmatrix}}S^2\xleftarrow{\begin{pmatrix}
x^{a-1}y^b&0\\
y &x
\end{pmatrix}}S^2\\
&\xleftarrow{\begin{pmatrix}
-x&0\\
y &x^{a-1}y^{b}
\end{pmatrix}}S^2\xleftarrow{\begin{pmatrix}
x^{a-1}y^b&0\\
y&x
\end{pmatrix}}S^2\longleftarrow\cdots,
\end{align*}
with $\partial_i=\partial_{i-2}$ for all $i\geq 5$. (If $a=0$, interchange $x$ and $y$.)
\item \(M=(x,y)\),
\[{\mathcal F}:\;\;\;\kk\longleftarrow S\longleftarrow 0.\]
\item \(M=(x^a,y)\) (or \(M=(x,y^b)\)) where \(a,b\geq 2\),
\[{\mathcal F}:\;\;\;\kk\longleftarrow S\xleftarrow{\;\;\;\;(x)\;\;\;\;}S\xleftarrow{\;\;\;\;(x^{a-1})\;\;\;\;}S\xleftarrow{\;\;\;\;(x)\;\;\;\;}S\longleftarrow \cdots,\]
with $\partial_i=\partial_{i-2}$ for all $i\geq 3$
\item \(M=(x^a,y^b)\) where \(a,b\geq 2\).
\end{enumerate}
\[{\mathcal F}:\;\;\;\kk\longleftarrow S\xleftarrow{\;\;\;\;(x\;y)\;\;\;\;}S^2\xleftarrow{\;\;\;\;\partial_2\;\;\;\;}S^3\xleftarrow{\;\;\;\;\partial_3\;\;\;\;}S^4\xleftarrow{\;\;\;\;\partial_4\;\;\;\;}S^5\longleftarrow\cdots,\]
where we give a formula for $\partial_i$ for $i\geq 2$ via polynomials $f_j^{(i)},g_j^{(i)}$ defined inductively:

\noindent {\bf Case ($i=2$):} Let $\left\{e_1^{(1)},e_2^{(1)}\right\}$ be a generating set for $F_1\cong S^2$ and $\left\{e_1^{(2)},e_2^{(2)},e_3^{(2)}\right\}$ be a generating set for $F_2\cong S^3$. Set syzygy map $\partial_2$ to be:
\begin{align*}
\partial_2(e_1^{(2)})&=x^{a-1}\cdot e_1^{(1)}\\
\partial_2(e_2^{(2)})&=y^{b-1}\cdot e_2^{(1)}\\
\partial_2(e_3^{(2)})&=-y\cdot e_1^{(1)}+x\cdot e_2^{(1)}.
\end{align*}
Set $f_1^{(2)}=x^{a-1}$, $f_2^{(2)}=y^{b-1}$, $g_3^{(2)}=-y$, and $f_3^{(2)}=x$.

\vspace{10pt}

\noindent {\bf Case ($i>2$):} Let $\{e_j^{(k)}\}_{j=1}^{k+1}$ be a generating set for the $k^{\text{th}}$ syzygy module $F_k\cong S^{k+1}$ for $k<i$, where we have chosen polynomials $\left\{f_j^{(k)}\right\}_{j=1}^{k+1}$ and $\left\{g_j^{(k)}\right\}_{j=3}^{k+1}$ so that our syzygy maps $\partial_k$ are:
\begin{align*}
\partial_k(e_j^{(k)})=\begin{cases}
f_j^{(k)}\cdot e_j^{(k-1)}&\text{ if $j=1,2$}\\
g_j^{(k)}\cdot e_{j-2}^{(k-1)}+f_j^{(k)}\cdot e_{j}^{(k-1)}&\text{ if $3\leq j\leq k$}\\
g_{k+1}^{(k)}\cdot e_{k-1}^{(k-1)}+f_{k+1}^{(k)}\cdot e_{k}^{(k-1)}&\text{ if $j=k+1$.}
\end{cases}
\end{align*}
Then the syzygy module $F_i\cong S^{i+1}$ has basis $\left\{e_j^{(i)}\right\}_{j=1}^{i+1},$ with syzygy map $\partial_i$ given by:
\begin{align*}
\partial_i(e_j^{(i)})=\begin{cases}
f_j^{(i)}\cdot e_j^{(i-1)}&\text{ if $j=1,2$}\\
g_j^{(i)}\cdot e_{j-2}^{(i-1)}+f_j^{(i)}\cdot e_{j}^{(i-1)}&\text{ if $3\leq j\leq i$}\\
g_{i+1}^{(i)}\cdot e_{i-1}^{(i-1)}+f_{i+1}^{(i)}\cdot e_{i}^{(i-1)}&\text{ if $j=i+1$,}
\end{cases}
\end{align*}
where the $f_j^{(i)}$, $g_j^{(i)}$ are defined inductively by:
\begin{align*}
f_1^{(i)}&=\frac{x^a}{f_1^{(i-1)}},\\
f_2^{(i)}&=\frac{y^b}{f_2^{(i-1)}},\\
g_j^{(i)}&=g_{j}^{(i-1)},\;\;\;\;f_j^{(i)}=-f_{j-2}^{(i-1)}\text{ for $3\leq j\leq i$}\\
g_{i+1}^{(i)}&=f_{i}^{(i-1)},\;\;\;\;f_{i+1}^{(i)}=-f_{i-1}^{(i-1)}.
\end{align*}

\begin{example} Let $M=(x^3,y^7)$ be an ideal in $R=\kk[x,y]$. Then the first two stages of the resolution of $\kk$ over $S=R/M$ are given by:
\[{\mathcal F}:\;\;\;\kk\longleftarrow S\xleftarrow{\;\;\;\;(x\;y)\;\;\;\;}S^2\xleftarrow{\;\;\;\;\partial_2\;\;\;\;}S^3\xleftarrow{\;\;\;\;\partial_3\;\;\;\;}\cdots,\]
with $\partial_2=\begin{pmatrix}
x^{2}&0 & -y\\
0 & y^{7}&x
\end{pmatrix},$ giving us $f_1^{(2)}=x^2$, $f_2^{(2)}=y^6$, $g_3^{(2)}=-y$ and $f_3^{(2)}=x$. From our inductive formula above, we calculate $f_j^{(3)}$ and $g_j^{(3)}$:
\begin{align*}
f_1^{(3)}&=\frac{x^3}{f_1^{(2)}}=\frac{x^3}{x^2}=x,\\
f_2^{(3)}&=\frac{y^6}{f_2^{(2)}}=\frac{y^6}{x^5}=y,\\
g_3^{(3)}&=g_{3}^{(2)}=-y,\;\;\;\;f_3^{(3)}=-f_{1}^{(2)}=-x^2\\
g_4^{(3)}&=f_3^{(2)}=x,\;\;\;\;f_4^{(3)}=-f_2^{(2)}=-y^6.
\end{align*}
Putting this into our formula for $\partial_3$, we have that the third stage of the resolution should be spanned by generators $\left\{e_1^{(3)},e_2^{(3)},e_3^{(3)},e_4^{(3)}\right\}$, where
\begin{align*}
\partial_3(e_1^{(3)})&=x\cdot e_1^{(2)}\\
\partial_3(e_2^{(3)})&=y\cdot e_2^{(2)}\\
\partial_3(e_3^{(3)})&=-y\cdot e_1^{(2)}-x^2\cdot e_3^{(3)}\\
\partial_3(e_4^{(3)})&=x\cdot e_2^{(2)}-y^6\cdot e_3^{(3)},
\end{align*}
or
\[\partial_3=\begin{pmatrix}
x & 0 & -y & 0\\
0 & y & 0 & x\\
0 & 0 & -x^2 & -y^6\\
\end{pmatrix}.\]

\end{example}

\providecommand{\bysame}{\leavevmode\hbox to3em{\hrulefill}\thinspace}
\providecommand{\MR}{\relax\ifhmode\unskip\space\fi MR }
\providecommand{\MRhref}[2]{%
  \href{http://www.ams.org/mathscinet-getitem?mr=#1}{#2}
}
\providecommand{\href}[2]{#2}


\end{document}